\def\rrt#1#2#3#4#5#6#7{\xymatrix{ {#1} \ar[r]^{} \ar@{-\ge}[d]_{#2} & {#4} \ar[d]^{#5} \\ {#3}  \ar[r] \ar@{--\ge}[ur]^{#7}& {#6} }}
\def\rtt{\rightthreetimes}
\def\lra{\longrightarrow}
\def\lr{{lr}}
\def\lrl{{l}}
\def\rlr{{r}}
\def\id{\operatorname{id}}
\def\oo{\infty}
\def\ra{\searrow}
\def\llrra{\leftrightarrow}
\def\xra{\xrightarrow}
\title{A suggestion towards a finitist's realisation of 
topology}
\author[]{\tiny This kind of universality is what, we believe, turns the hidden wheels of the human thinking machinery.}
\email{mi\!\!\!ishap\!\!\!p@ma.sdf.org kip302002@yaho\!\!\!oo\!\!\!o.com \url{https://t.me/McVlr}}
\address{Konstantin Pimenov (SpbGU) \,\&\, Masha Gavrilovich (IPRERAN)}
\begin{document}\enlargethispage{7\baselineskip}
\begin{abstract}
We observe that the notion of a trivial Serre fibration, a Serre fibration, and being contractible, for finite CW complexes, 
can be defined in terms of the Quillen lifting property with respect to a single
 map  $M\to\Lambda$ of finite topological spaces (preorders) of size 5 and 3.
In particular, we observe that the 
double Quillen orthogonal $\{M\to\Lambda\}^{lr}$ is precisely the class of trivial Serre fibrations
if calculated in a certain category of nice topological spaces.
This suggests a question whether there is a finitistic/combinatorial definition of a model structure 
on the category of topological 
spaces entirely in terms of the single morphism $M\to\Lambda$,
apparently related to  the Michael continuous selection theory.
%
%
\end{abstract}  
\maketitle
\begin{wrapfigure}{r}{0.158\textwidth}\setcounter{figure}{1}
\includegraphics[scale=.26]{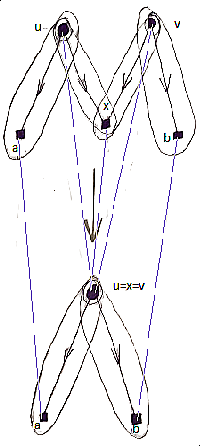}
\tiny Fig.1.{\tiny 
	 Arrow~$u\to a$ means $a\in cl\,u$.  
	 \begin{center} $\left\{\underset{a}{}{\swarrow} \overset{\raisebox{1pt}{$u$}}{}{\searrow} \underset{x}{}{\swarrow}\overset{\raisebox{1pt}{$v$}}{}{\searrow}\underset{b}{}\right\}$\hskip2pt 
	 \\\hskip2pt$\color{blue}\Downarrow$\hskip2pt
	 \\\hskip2pt$\left\{\underset{a}{}{\swarrow}\overset{\raisebox{1pt}{$u\!=\!x\!=\!v$}}{}{\searrow} \underset{b}{}\right\}$\end{center}
	}
\end{wrapfigure}
\section{Introduction} Being contractible, compact (for nice spaces), trivial Serre fibration (for nice spaces, with caveats), 
connected, dense,  extremally disconnected, 
zero-dimensional, and separation axioms $T_0,T_1$, $T_4,T_5$, can each be defined 
in terms of the Quillen lifting property \cite{1} 
and a single map of topological spaces (preorders), usually with less than 7 points \cite{2}.
This suggests a combinatorial, computational notation for these topological properties,
which could perhaps be of use in computer algebra and proof verification. 
This notation shows there is {\em finite combinatorics implicit in the basic definitions of topology}---what does it tell us ?  

In this note we show the finite combinatorics implicit in the basic definitions of {\em contractible}, {\em trivial fibrations}, and {\em fibrations}.  
We observe that for a certain map $M\to\Lambda$ of finite topological spaces (see Fig.~1), 
the double Quillen orthogonal (negation) $\left\{M\to \Lambda\right\}^{lr}$, defined below, is 
exactly the class of trivial Serre fibrations when calculated in a certain category of nice spaces.
If we calculate the same orthogonal in the category of (all) topological spaces, 
we only prove that 
\begin{itemize}\item[] 
$\text{a finite CW complex }X\text{ is contractible iff }
X\to \{o\}\,\,\in\,\,\left\{M\to \Lambda\right\}^{lr}$
\end{itemize}
 \enlargethispage{6\baselineskip}
 In fact, the precise choice of the map $M\to\Lambda$ 
 in the double Quillen orthogonal (negation) $\left\{M\to \Lambda\right\}^{lr}$\footnote{Recall that a morphism $i$ in a category has the {\em left lifting property} with respect to a morphism $p$, 
and $p$ also has the {\em right lifting property} with respect to $i$, 
denoted $i\rtt p$, 
iff for each $f:A\to X$ and $g:B\to Y$ such that $p\circ f = g \circ i$ there exists $h:B\to X$ such that $h\circ i = f$ and $p\circ h = g$.


For a class $P$ of morphisms in a category, its {\em left orthogonal} $P^{\rtt l}$ with respect to the lifting property, respectively its {\em right orthogonal} $P^{\rtt r}$, is the class of all morphisms which have the left, respectively right, lifting property with respect to each morphism in the class $P$. In notation,
$$
P^{\rtt l} := \{ i \,\,:\,\, \forall p\in P\,\, i\rtt p\},
P^{\rtt r} := \{ p \,\,:\,\, \forall i\in P\,\, i\rtt p\}, P^\lr:=(P^\lrl)^\rlr,..$$

Taking the orthogonal of a class $P$ is a simple way to define a class of morphisms excluding non-isomorphisms 
from $P$, in a way which is useful in a diagram chasing computation, and is often used to
define properties of morphisms starting from an explicitly given class of (counter)examples. 
  For this reason, 
it is convenient and intuitive to refer to $P^\lrl$ and $P^\rlr$ as {\em left, resp.~right, Quillen negation} of property $P$. 
See \cite{1} for a quick explanation and some examples.
}  is a way to add 
precise ``niceness'' assumptions to the ``na\i ve'' lifting property defining fibrations:
\begin{itemize}

\item[(wf)] a map $p:Y\to B$ is a trivial fibration 
iff the lifting property  
$A\to X \rtt  Y\xra p B$
holds whenever $A\subset X$ is a ``nice'' closed subset of a ``nice'' space $X$. 

\item[(f)] 
a map $p:Y\to B$ is a fibration  
iff, whenever $A\subset X$ closed and ``nice'', for any lifting problem  
$A\to X \rtt  Y\xra p B$, 
there exists a diagonal lifting  defined on some open neighbourhood of $A$.\footnote{
Formally in notation, for any commutative square
$$\xymatrix{ A \ar[r]^{f} \ar@{->}[d]_{i} & {Y} \ar[d]^{p} \\ {X}  \ar[r]^{\phi} & {B} }$$
there is an open $A\subset U\subset X$ and a map $\tilde f_U:U\to Y$ such that the diagram 
$$\xymatrix{ {} & A \ar[r]^{f} \ar@{..>}[dl] \ar@{->}[d]_{} & {Y} \ar[d]^{p} \\ 
{U}\ar@{..>}[r] \ar@{..>}[urr]|{\tilde f_U} & X  \ar[r]|{\phi_{|U}} & {B} }$$
commutes.
The diagram chasing rendering of this uses the non-Hausdorff mapping cone of $Y\xra p B$. 
}
\end{itemize}
We use the word ``nice'', in this paper, 
to mean various precise assumptions of the kind made to avoid 
spurious difficulties related to wild phenomena such as curves cheerfully filling cubes,
which are irrelevant from the point of view of the topological intuition of shapes,
cf.~\cite[\S5, pp.28/29]{4}.

The definition of Serre fibration chooses the nicest possible $A\subset X$ -- the inclusions of a sphere 
as the boundary of a ball. Michael continuous selection theory \cite[Thm.1.2]{6} chooses 
least(?) nice ones: an arbitrary closed subset of a Hausdorff paracompact space of finite Lebesgue dimension
(see \S\ref{s.3.1}, esp.~Thm.\ref{MTh1.2}, for a summary of \cite[Thm.1.2]{6} of Michael continuous selection theory; 
also see~Lemma~\ref{s.2.2.1}(4), \S\ref{s2.3}(ii), and Conjecture~\ref{s.2.4.2}).

As noted above, the map $M\to \Lambda$ does capture the implicit combinatorics of the definition of 
a trivial fibration in presence of the right ``niceness'' assumptions, i.e.~if calculated in a certain
subcategory of nice spaces, but it is not clear to us whether this implicit combinatorics is 
sufficient if calculated in the category of all topological spaces. Perhaps the reader would see this right away. 

It is easy to see that the map $M\to \Lambda$ captures the ``combinatorics'' implicit in the definition of normality:
a space $X$ is normal ($T_4$ but not necessarily $T_1$) iff $\emptyset\to X \,\rtt\, M\to \Lambda$:
indeed, to give a map $X\to \Lambda$ is to give two disjoint closed subsets of $X$ (the preimages 
of the two closed points of $\Lambda$), and to give a factorisation $X\to \Lambda$ is to give 
their disjoint neighbourhoods (the preimages of the open subsets of $M$ separating the preimages 
of the two closed points of $\Lambda$). Instead of $M\to\Lambda$, one may consider
the more complicated map implicit in the definition of hereditary normal (separation axiom $T_5$),
see the proof of Lemma~\ref{s.2.2.1L} for a discussion. Seeing that the map $M\to \Lambda$ 
captures the ``combinatorics'' implicit in the proof of Tietze extension theorem and, arguably,
the notion of contractability, is
slightly less obvious, see the proof of Lemma~\ref{s.2.2.1L}(2).


Everything in this note is very elementary: a reader is likely to improve upon our claims,
and any proofs can be given as exercise to any student familiar with the terminology. 
\subsubsection*{Structure of the paper} As a warm-up the reader may want to skip,
\S\ref{s.2.1}-\ref{s.2.1p} define {\em connected}, {\em quotient}, and {\em compact} in terms of maps of spaces with at most 2 points,
as 
$$\{\emptyset\to\{o\}\}^{rll}\text{, }\{\emptyset\to\{o\}\}^{lrrrl}\text{, and }
\left(\left\{ \{o\} \longrightarrow \left\{ o 
	\rotatebox{-12}{\ensuremath{\to}}\raisebox{-2pt}{\ensuremath{c}}\right\} \right\}^r_{<5}\right) ^\lr$$
In \S\ref{s.2.1} we also define a few other notions 
starting with the simplest possible map, the inclusion of the empty space into a singleton,
and in Appendix~\S\ref{s.3.2} we list a few more. In \S\ref{s.2.1p} we define the class of proper maps of nice spaces. 

\S\ref{s.2.2} and \S\ref{s.2.3} is the main body of the paper. In \S\ref{s.2.2} we discuss
the definition of trivial fibrations, and in \S\ref{s.2.3} we discuss the definitions of fibrations,
trivial fibrations, and Michael selection theory. This enables
us to conjecture a ``finitistic''/computational model structure in \S\ref{s.2.5}.

In Appendix \S\ref{s.3.1} we state \cite[Thm.1.2]{6} of continuous selection theory we use,
and Appendix \S\ref{sBourbaki} we state the theorems of \cite{Bourbaki} we use for compactness.

\section{Observations}\label{s.2}

A number of basic notions in topology can be concisely defined, 
often starting from simplest examples, by  
repeatedly taking the orthogonal with respect to the Quillen lifting property
in the category of topological spaces \cite{1,2,3}. 

Here is a sample: {\em connected, compact, and contractible}; see \cite{2} for a longer list.

\subsection{Connected}\label{s.2.1} Being connected can be defined using the simplest possible map,
the embedding of the empty set into a singleton. 

\begin{lemm}[{$\emptyset\to \{o\}$}]\label{s.2.2.1} In the category of (all) topological spaces,
\addtocounter{enumi}{1}
\begin{enumerate}
\item[r:] $\{\emptyset\to\{o\}\}^r$ is the class of surjections

\item[rl:] $\{\emptyset\to\{o\}\}^{rl}$ is the class of maps $A\to A\sqcup D$ where $D$ is discrete

\item[rllr:] $\{\emptyset\to\{o\}\}^{rllr}$ is the class of maps $A\to A\sqcup D$

\item[rr:] $\{\emptyset\to\{o\}\}^{rr}$ is the class of {\em subsets}, i.e.~the inclusions $i:A\to B$ where $A$ is a subset of $B$, and $i(a)=a$, $a\in A$. 

\item[lrrrl:] $\{\emptyset\to\{o\}\}^{lrrrl}$ is the class of {\em quotients}, i.e. the maps $f:A\to B$ such that 
a subset $U\subset B$ is open in $B$ iff its preimage $f^{-1}(U)\subset A$ is open in $A$.

\item[rll:] A map $f:A\to B$ of ``nice'' spaces belongs to $\{\emptyset\to\{o\}\}^{rll}$ 
iff the induced map $\pi_0(f):\pi_0(A)\to \pi_0(B)$ of connected components is surjective.
In particular, \begin{itemize}

\item A topological space $X$ is connected iff  for each, equiv.~any, map $\{o\}\to X$ 
from a singleton it holds  
$$\{o\}\to X \,\in \, \{\emptyset\to\{o\}\}^{rll}$$ 
\end{itemize}

\end{enumerate}
\end{lemm}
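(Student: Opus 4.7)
The plan is to verify each part by directly unpacking the corresponding lifting diagram: after (r), each subsequent item is computed as a one-sided orthogonal of a class already identified, and in each case I would exhibit a small family of test diagrams forcing the claimed property in one direction, and construct the required lift in the other.

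For (r), a square with $\emptyset\to\{o\}$ on the left is simply a choice of point $b\in B$ (the arrow $\{o\}\to B$), and the diagonal is a preimage in $Y$; the equivalence with surjectivity is immediate. For (rr), the right orthogonal of surjections, a subspace inclusion $A\hookrightarrow B$ admits a lift in any square with a surjection $Y\to Z$ because the lift is forced set-theoretically by commutativity and surjectivity, and is continuous by the subspace topology. Conversely, I would test a candidate $p:A\to B$ against the identity-as-set-function $(A,\tau_A)\to(A,\tau_{\mathrm{sub}})$ (continuous since $\tau_A$ refines $\tau_{\mathrm{sub}}$ by continuity of $p$) and against a quotient surjection identifying two would-be preimages of one point, forcing $p$ injective with $\tau_A=\tau_{\mathrm{sub}}$.

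For (rl), the left orthogonal of surjections, the easy direction is that $A\hookrightarrow A\sqcup D$ with $D$ discrete has LLP against any surjection: the lift is prescribed on $A$ by the square and on each $d\in D$ may be chosen as any preimage (continuous since $D$ is discrete). The converse uses two test surjections: $A\sqcup B\to B$ (the candidate on $A$, identity on $B$) forces $p$ to be an open embedding with clopen image, and $A\sqcup(B\setminus p(A))_{\mathrm{disc}}\to B$ forces each singleton of the complement to be open in $B$, i.e.\ the complement discrete.

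The crux of (rll) is that a square for $f\rtt(A\to A\sqcup D)$ amounts to a continuous map $v:Y\to A\sqcup D$ with $v(f(X))\subseteq A$, and a lift exists iff $v$ globally lands in $A$. Specialising $A=\{o\}$, $D=\{d\}$ reduces $v$ to the indicator of a clopen subset, so the obstruction is a nonempty clopen of $Y$ disjoint from $f(X)$; hence $f\in\{\emptyset\to\{o\}\}^{rll}$ iff the induced map on quasi-components is surjective, which for ``nice'' spaces coincides with $\pi_0(f)$ being surjective, and for $f:\{o\}\to X$ amounts to the connectedness of $X$. Part (rllr) then follows: a clopen inclusion $A\to A\sqcup D$ has RLP against every such $f$ because the clopens $v^{-1}(d)$ must be empty by $\pi_0$-surjectivity; conversely, any subspace inclusion $A\hookrightarrow B$ with $A$ not clopen fails --- for $B$ connected and $A\subsetneq B$, take $f:\{*\}\to B$ picking a point in $A$ and $v=\mathrm{id}_B$, giving a commuting square with no lift $B\to A$.

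The hardest case is (lrrrl), whose five nested orthogonals require a genuine calculation rather than a direct unpacking. One first computes $\{\emptyset\to\{o\}\}^l=\{f:X\to Y:X=\emptyset\Rightarrow Y=\emptyset\}$ and then climbs through $^{lr}$, $^{lrr}$, $^{lrrr}$. The plan is to identify $^{lrrr}$ broadly enough that for any candidate $f:A\to B$ it contains the canonical continuous bijection $(B,\tau_B)\to(B,\tau_{\mathrm{fin}})$ from the given topology on $B$ to the final topology induced by $f$; the LLP of $f$ against this map then forces $\tau_B=\tau_{\mathrm{fin}}$, i.e.\ $f$ is a quotient map, and conversely every quotient satisfies this LLP by the universal property of the final topology. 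I expect the main obstacle to be controlling the intermediate classes tightly enough that no spurious extra conditions creep in at any of the four levels --- a careful five-step diagram chase rather than a one-line identification.
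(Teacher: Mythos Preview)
Your approach---direct diagram chasing with explicit test maps---is exactly the paper's, and in fact you carry it much further: the paper proves only (r) and (rr) in detail and dismisses the rest with ``Rest is similar.'' Your arguments for (r), (rr), (rl), and the core of (rll) are correct and match what the paper would have written had it bothered.

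Two concrete slips, however. In (lrrrl) your test map points the wrong way: the final (quotient) topology $\tau_{\mathrm{fin}}$ is \emph{finer} than $\tau_B$ (it is the finest topology making $f$ continuous, and $\tau_B$ already does), so the continuous bijection is $(B,\tau_{\mathrm{fin}})\to(B,\tau_B)$, not the reverse. With the arrow corrected your plan works verbatim: this map is injective, hence in $\{\emptyset\to\{o\}\}^{lrrr}$ by the appendix's identification of that class, and the LLP of $f$ against it forces $\tau_B=\tau_{\mathrm{fin}}$. You also need a separate test (e.g.\ against the injection $B\setminus\{b_0\}\hookrightarrow B$) to get surjectivity of $f$ first.

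In (rllr) your converse only rules out non-clopen \emph{subspace inclusions}; you still need to show that any $p\in\{\emptyset\to\{o\}\}^{rllr}$ is a subspace inclusion to begin with. Injectivity follows by testing against $f:\{1,2\}_{\mathrm{disc}}\to\{*\}$ (which lies in $\{\emptyset\to\{o\}\}^{rll}$) with $u(i)=a_i$ for putative $a_1\neq a_2$ in the same fibre; the embedding and clopen-image conditions need similar small tests. Also, your intermediate claim that (rll) is exactly ``surjective on quasi-components'' overshoots slightly in general (meeting every clopen is weaker), though it is correct under the paper's ``nice'' hypothesis that components are clopen.
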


Here in (r) and (rl), $A\sqcup D$ denotes the disconnected union of $A$ and $D$, i.e. both 
subsets $A$ and $D$ are closed and open, and the topology on both $A$ and $D$ is induced.

In (rll), by a space being ``nice'' we mean that it splits into a disconnected union of closed 
and open connected components. 
\begin{proof}
1. By definition 
$$ \{\emptyset\to \{o\}\}^r := \left\{ X\xra{g} Y: \emptyset\to \{o\} \rtt X\xra{g} Y \right\}$$
 is the class of maps which have the right lifting property with respect
to the embedding of the empty subset into a singleton. This lifting property says that any point of $Y$ (the image of $\{o\}$ in $Y$)
has a preimage in $X$ (the image of $\{o\}$ in $X$), i.e. is surjective.
2. By definition  
$$ \{\emptyset\to \{o\}\}^{rr} = \left\{ X\xra{g} Y: f\rtt g \text{ for any } f\in \left\{\emptyset\to \{o\}\right\}^r\right\}$$
is the class of maps which have the right lifting property with respect
to any surjection. If map $g:X\to Y$ represents a subset, i.e.~$X\subset Y$, the topology on $X$ is induced from $Y$, and 
and $f_{|X}=\id_{|X}$, then the image of $B\to Y$ is contained in $X$, and, as the topology on $X$ is induced,  
the lifting is continuous. In the opposite direction, take $B$ to be the image of $g:X\to Y$, 
and $A$ to be the preimage of $g:X\to Y$ with topology induced from $Y$. Then $f \rtt g$ lifts iff $g:X\to Y$ 
represents a subset.
 Rest is similar. 
\end{proof}

\subsection{Compact}\label{s.2.1p}
Perhaps the simplest example of a map which is not closed (and thereby not proper),
is the embedding of a point
	as the open point in the two-point space  with one point open and one point closed.
We denote this map by $\{o\} \longrightarrow \left\{ o 
	\rotatebox{-12}{\ensuremath{\to}}\raisebox{-2pt}{\ensuremath{c}}\right\}$.
\begin{lemm}[{$ \{o\} \longrightarrow \left\{ o 
	\rotatebox{-12}{\ensuremath{\to}}\raisebox{-2pt}{\ensuremath{c}}\right\}$}]\label{s.2.2.1p}
In the category of (all) topological spaces, the class 
$\left(\left\{ \{o\} \longrightarrow \left\{ o 
	\rotatebox{-12}{\ensuremath{\to}}\raisebox{-2pt}{\ensuremath{c}}\right\} 
\right\}^r_{<5}\right) 
^\lr$ is a class of proper maps, and 
\begin{itemize}
\item a map of ``nice'' spaces is proper iff it lies in  
$\left(\left\{ \{o\} \longrightarrow \left\{ o 
	\rotatebox{-12}{\ensuremath{\to}}\raisebox{-2pt}{\ensuremath{c}}
\right\}\right\}^r_{<5} 
\right) ^\lr$
\end{itemize}
In particular, a Hausdorff space $K$ is compact iff 
$$K\to\{o\}\,\in\, \left(\left\{ \{o\} \longrightarrow \left\{ o 
	\rotatebox{-12}{\ensuremath{\to}}\raisebox{-2pt}{\ensuremath{c}}
\right\}\right\}^r_{<5} 
\right) ^\lr
$$
\end{lemm}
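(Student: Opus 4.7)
The plan is to prove the lemma in three stages. The first stage is to unfold the right lifting property directly: a map $g:A\to B$ lies in $\{i\}^r$, where $i:\{o\}\to\{o\to c\}$, iff for every $a\in A$ and every $b'\in\overline{\{g(a)\}}$ there is $a'\in\overline{\{a\}}$ with $g(a')=b'$. Restricting to finite topological spaces of size $<5$ yields a finite catalogue $T:=\{i\}^r_{<5}$ of ``specialisation-lifting'' test maps---the identity on a point, the collapse $\{o\to c\}\to\{o\}$, the two-point discrete map $\{0,1\}\to\{o\}$, and a handful of other small configurations---each encoding a local open/closed specialisation pattern.

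Second, I would analyse $T^l$. By general orthogonality, $T^l$ is closed under pushouts, coproducts, transfinite compositions, and retracts. Unfolding the LLP against each $g\in T$ gives concrete constraints on $f:A\to B$ in $T^l$: every open subset, clopen decomposition, and specialisation pattern on $A$ must extend to $B$. Under saturation $T^l$ therefore contains the canonical ``tube'' inclusions $C\hookrightarrow V$ of a closed subset into an open neighbourhood---precisely the inclusions Bourbaki uses to witness non-closedness (Appendix~\S\ref{sBourbaki}).

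Third, both directions of the main equivalence follow. If $p\in(T^l)^r$, then $p$ is proper: any failure of universal closedness of some $p\times\id_Z$ produces, by Bourbaki's criterion, a tube inclusion against which $p$ cannot lift; but such tube inclusions lie in $T^l$, contradicting $p\in(T^l)^r$. Conversely, for a proper map of nice spaces, Bourbaki's tube lemma supplies a diagonal on each (quasi-compact) fibre, and the niceness hypothesis (paracompactness and finite dimension, as in the Michael selection set-up of \S\ref{s.3.1}) allows the fibrewise diagonals to be glued into a continuous global lift. The Hausdorff compact case then follows immediately: a Hausdorff space $K$ is compact iff $K\to\{o\}$ is proper (Kuratowski--Mrowka / Bourbaki), iff it lies in $(T^l)^r$.

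The main obstacle is the contrapositive direction of the third stage: translating a topological failure of universal closedness into a concrete finite lifting problem against some specific $f\in T^l$. The bound $<5$ is delicate---large enough to accommodate the short ``closure-of-closure'' chains that encode local non-closedness, yet small enough that $T$ remains a finite, enumerable set. A complete argument requires expressing an arbitrary tube inclusion as a transfinite pushout of the generating small examples, which is where the ``nice space'' hypothesis (essentially controlling the topology of tubes) enters the converse.
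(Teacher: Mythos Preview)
Your plan diverges from the paper's proof at both critical points, and the divergences are genuine gaps rather than alternative routes.

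For the inclusion $\bigl(\{i\}^r_{<5}\bigr)^{lr}\subset\{\text{proper}\}$, the paper does \emph{not} go through tube inclusions or universal closedness. Instead it uses Bourbaki's ultrafilter characterisation: $f$ is proper iff $A\to A\sqcup_{\mathfrak U}\{\infty\}\rtt f$ for every ultrafilter $\mathfrak U$. The point is then to check that each such ultrafilter map lies in $\bigl(\{i\}^r_{<5}\bigr)^l$, which is concrete and easy. Your proposal to ``translate a failure of universal closedness into a finite lifting problem'' and to realise arbitrary tube inclusions as transfinite pushouts of the generators is left entirely unspecified; it is not clear this can be made to work, and in any case it is not needed.

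For the converse, you invoke paracompactness, finite dimension, and Michael selection to glue fibrewise diagonals. This is the wrong toolkit: Michael selection is used elsewhere in the paper (for fibrations), but here ``nice'' means Hausdorff and hereditarily normal, and the relevant input is the Smirnov--Vulikh--Taimanov extension theorem. The paper isolates four explicit closed maps of finite spaces---encoding ``dense image'', ``injective'', ``induced topology'', and ``disjoint closures extend''---and observes that their combined left orthogonal is exactly the hypothesis of that extension theorem. Hence $K\to\{o\}$ lies in the $lr$-orthogonal of these four maps whenever $K$ is compact Hausdorff, and these four maps visibly lie in $\{i\}^r_{<5}$. Your catalogue in stage one (identity, collapse, discrete double point, \ldots) does not include the crucial map $\{a\leftarrow u\to b\}\to\{a=u=b\}$ that encodes the disjoint-closures condition, and without it the Taimanov argument cannot start.
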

Here,  ``nice'' may be taken to mean Hausdorff hereditary normal (separation axioms $T_1$ and $T_5$),
and $\left\{ \{o\} \longrightarrow \left\{ o 
	\rotatebox{-12}{\ensuremath{\to}}\raisebox{-2pt}{\ensuremath{c}}
\right\}\right\}^r_{<5}$ denotes the subclass of $\left\{ \{o\} \longrightarrow \left\{ o 
	\rotatebox{-12}{\ensuremath{\to}}\raisebox{-2pt}{\ensuremath{c}}
\right\}\right\}^r$ consisting of maps of spaces with less than 5 points. 

\begin{proof} See  \S\ref{compacts} or \cite[\S2.2]{mintsGE} for a verbose explanation; here we are brief. 
First check that a map $f$ of finite spaces is closed, equiv.~proper, iff 
$\{o\} \longrightarrow \left\{ o 
	\rotatebox{-12}{\ensuremath{\to}}\raisebox{-2pt}{\ensuremath{c}}\right\} \rtt f$. 
	The definition of being proper via ultrafilters (see Bourbaki \cite[I\S10.2,Th.1(d)]{Bourbaki}, quoted in \S\ref{sBourbaki}) 
	expresses being proper as a lifting property with respect to a class of maps associated with ultrafilters: $f$ is proper iff
	$$ A\to A\sqcup_{\mathcal U} \{\infty\} \,\rtt\, X\xra f Y$$ where the topology on 
	$A\sqcup_{\mathcal U} \{\infty\}$ is such that $\infty$ is closed, $\mathfrak{U}$ is the neighbourhood filter of $\infty$,
	and the topology on $A$ is induced \cite[I\S6.5, Def.5, Example]{Bourbaki}. These maps belong to 
 $\left(\left\{ \{o\} \longrightarrow \left\{ o 
	\rotatebox{-12}{\ensuremath{\to}}\raisebox{-2pt}{\ensuremath{c}}
\right\}\right\}^r_{<5}\right) 
 ^l$, hence any map in $\left(\left\{ \{o\} \longrightarrow \left\{ o 
	\rotatebox{-12}{\ensuremath{\to}}\raisebox{-2pt}{\ensuremath{c}}
\right\}\right\}^r_{<5}\right) 
 ^l$ is proper.
 
Smirnov-Vulikh-Taimanov theorem \cite[3.2.1,p.136]{Engelking} gives sufficient conditions 
to extend a map to a compact Hausdorff space, and can be generalised to give the required 
lifting property. It says that a map to a compact Hausdorff space can be extended to the whole space $X$
from a dense subset $A$ satisfying (in fact the necessary) condition 
for every pair $B_1,B_2$ of disjoint closed subsets of \ensuremath{A} the inverse images $f^{-1}(B_1)$
and $f^{-1}(B_2)$ have disjoint closures in the space $X$. 
A verification shows that the following four maps are closed and their left orthogonals define these sufficient conditions 
on $A\to X$:\footnote{Our notation represents finite topological space as preorders or finite categories with each diagram commuting, \
and is hopefully self-explanatory; see \cite{3} for details. 
In short, 
an arrow $o\to c$ indicates that $c\in \operatorname{cl}\,o$, and each point goes to ``itself''; 
the list in $\{..\}$ after the arrow indicates new relations/morphisms added, thus in $\{o\to c\}\lra \{o=c\}$ 
the equality indicates that the two points are glued together or that we added an identity morphism between $o$ and $c$.
The notation in the 3rd line informal (red indicates new/added elements), and in the 4th line reminds of a computer syntax.}
$$\hskip-42pt\begin{array}{ccccc}
	\{a\raisebox{6pt}{}\rotatebox{12}{\ensuremath{\leftarrow}}\raisebox{2pt}{\ensuremath{u}\raisebox{4pt}
	{}\rotatebox{-13}{\ensuremath{\rightarrow}}} b\}\lra\{a=u=b\} 	
	 &
\{a\llrra b\}\lra \{a=b\} &  \left\{{\ensuremath{o}}\rotatebox{-12}{\ensuremath{\to}}  \raisebox{-2pt}{c}\right\}\lra \{o=c\} & 
\{c\}\lra \left\{{\ensuremath{o}}\rotatebox{-12}{\ensuremath{\to}}  \raisebox{-2pt}{c}\right\} &
\\ \text{(disjoint closures)}        & \text{(injective)}            & \text{(pullback topology)}  & \text{(dense image)}&
\\ \hskip3pt	\{a\raisebox{6pt}{\color{red}\bf\,=}\!\!\!\!\!\!\rotatebox{12}{\ensuremath{\leftarrow}}\raisebox{2pt}{\ensuremath{u}\raisebox{4pt}{\color{red}\bf\,=}\!\!\!\!\!\!\rotatebox{-13}{\ensuremath{\rightarrow}}} b\} & 
\{a \leftrightarrow \!\!\!\!\!\!\raisebox{6pt}{{\color{red}\bf\!\!=\,\,}} b\}& 
\{\raisebox{0pt}{\ensuremath{o}}\raisebox{6pt}{\color{red}\bf\,=}\!\!\!\!\!\!\rotatebox{-12}{\ensuremath{\to}}  \raisebox{-2pt}{c}\}&
		\{\raisebox{0pt}{\ensuremath{o}}{\color{red}\bf\rotatebox{-12}{\ensuremath{\to}}  \raisebox{-2pt}{c} }\}

\\	\verb|{a<-u->b}-->{a=u=v}|& \verb|{a<->b}-->{a=b}|& \verb|{o->c}-->{o=c}|& \verb|{c}-->{o->c}|&
\end{array}$$
Hence, the Smirnov-Vulikh-Taimanov theorem \cite[3.2.1,p.136]{Engelking} implies that a Hausdorff space $K$ is compact iff $K\to\{o\}$ is in\newline
$\left\{
\{a\raisebox{6pt}{}\rotatebox{12}{\ensuremath{\leftarrow}}\raisebox{2pt}{\ensuremath{u}\raisebox{4pt}
	{}\rotatebox{-13}{\ensuremath{\rightarrow}}} b\}\lra\{a=u=b\} 	
	 ,
\{a\llrra b\}\lra \{a=b\} ,
  \left\{{\ensuremath{o}}\rotatebox{-12}{\ensuremath{\to}}  \raisebox{-2pt}{c}\right\}\lra \{o=c\} ,
\{c\}\lra \left\{{\ensuremath{o}}\rotatebox{-12}{\ensuremath{\to}}  \raisebox{-2pt}{c}\right\} 
\right\}^{lr},$\newline	 and the latter is a subclass of  $\left(\left\{ \{o\} \longrightarrow \left\{ o 
	\rotatebox{-12}{\ensuremath{\to}}\raisebox{-2pt}{\ensuremath{c}}
\right\}\right\}^r_{<5}\right) 
 ^{lr}$.
\end{proof}
Is it useful to say that these four maps of preorders reveal combinatorics implicit in the notion of compactness ? 

Note that for this statement it is important that the category of topological spaces contains spaces associated with ultrafilters
that would usually be considered to belong to wild phenomena such as curves cheerfully filling cubes,
which are irrelevant from the point of view of the topological intuition of shapes,
cf.~\cite[\S5, pp.28/29]{4}.

\subsection{Contractible}\label{s.2.2}
To define {\em contractible} (among  ``nice'' spaces), 
it is enough to consider a morphism $M\to\Lambda$ from a space $M$ with 5 points
(two open and three closed), into a space $\Lambda$ with 3 points (one open and two closed), see~Fig.~1 
\begin{lemm}[{$M\to \Lambda$}]\label{s.2.2.1L} In the category of (all) topological spaces,	
$\left\{ M\to \Lambda\right\}^{lr}$ is a class of trivial Serre fibrations, and 
\begin{enumerate}
\item A ``nice'' space $Y$ is contractible iff $$Y\to \{o\} \in \left\{ M\to \Lambda \right\}^{lr}$$
\item $X$ is normal (not necessarily Hausdorff) iff $\emptyset\to X  \, \in \, \left\{ M\to \Lambda\right\}^{l}$, 
i.e.$$\emptyset\to X \rtt M\to \Lambda$$
\item For a map $A\to X$ 
	from a Hausdorff space $A$ to a ``nice'' (meaning Hausdorff hereditary normal) space $X$, 
it represents a closed subset $A\subset X$ iff	
$A\hookrightarrow X \, \in \, \left\{ M\to \Lambda\right\}^{l}$, i.e. 
$$A  \hookrightarrow X \rtt M\to \Lambda$$
\end{enumerate}
\end{lemm}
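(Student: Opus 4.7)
The plan is to prove the three items in order (2), (3), (1), since each progressively unwinds the lifting property against $M\to\Lambda$ and (1) reduces to (3) via standard homotopy data. For (2), I would simply spell out the square: a map $\psi:X\to\Lambda$ amounts to a pair of disjoint closed subsets $F_a:=\psi^{-1}(a)$, $F_b:=\psi^{-1}(b)$, and a continuous lift $\tilde\psi:X\to M$ (sending these two strata to $a,b$ and the open complement into $\{u,x,v\}$) exists iff $F_a,F_b$ admit disjoint open neighbourhoods $W_a,W_b$: the preimages $\tilde\psi^{-1}(\{a,u\}) = W_a$ and $\tilde\psi^{-1}(\{b,v\}) = W_b$ must be open, and conversely, given such $W_a,W_b$, a lift is defined by sending $W_a\setminus F_a\to u$, $W_b\setminus F_b\to v$, and the remaining complement of $W_a\cup W_b$ to the middle closed point $x$. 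This equivalence is precisely normality.

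For (3), the same unwinding applied to a general square $\phi:A\to M$, $\psi:X\to\Lambda$ yields disjoint closed $F_a,F_b\subset X$ together with a compatible $M$-stratification $A=A_a\sqcup A_u\sqcup A_x\sqcup A_v\sqcup A_b$ with $F_a\cap A=A_a$, $F_b\cap A=A_b$, and with $A_u,A_v$ disjoint open in $A$. A lift amounts to disjoint open $V_u,V_v\subset X\setminus(F_a\cup F_b)$ satisfying $V_u\cap A=A_u$, $V_v\cap A=A_v$, and with $F_a\cup V_u$ and $F_b\cup V_v$ open in $X$. Assuming $A$ is closed in a Hausdorff hereditarily normal $X$, the pair $F_a\cup A_u$ and $F_b\cup A_v$ is separated in the $T_5$ sense (neither meets the other's closure, using that $A$ is closed and $A_u,A_v$ are open in $A$ with appropriate disjointness from $A_b,A_a$), so hereditary normality provides disjoint open neighbourhoods $W_a,W_b$ in $X$; setting $V_u:=W_a\setminus F_a$, $V_v:=W_b\setminus F_b$ (and possibly shrinking to keep the traces on $A$ exact) produces the lift. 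Conversely, if $A$ is not closed, pick $p\in\overline A\setminus A$ and take $\phi:A\to M$ constant to $x$ together with $\psi:X\to\Lambda$ defined by $\psi(p):=a$ and $\psi$ sending $X\setminus\{p\}$ to the open point (continuous, since $\{p\}$ is closed by Hausdorffness of $X$); the square commutes, but any lift $\tilde f$ must send $A$ to the closed point $x\in M$, hence $\tilde f^{-1}(x)$ is closed in $X$ and contains $A$, so it contains $\overline A\ni p$, contradicting $\tilde f(p)=a$.

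For (1), part (3) already shows that every closed inclusion of a Hausdorff space into a nice one---in particular the sphere-disk pairs $S^{n-1}\hookrightarrow D^n$---belongs to $\{M\to\Lambda\}^l$, so $\{M\to\Lambda\}^{lr}$ is automatically contained in the class of trivial Serre fibrations. For the contractibility clause, the $(\Leftarrow)$ direction is immediate: the inclusion $Y\times\{0,1\}\hookrightarrow Y\times[0,1]$ is closed and nice, hence in $\{M\to\Lambda\}^l$ by (3), and lifting the map $(y,0)\mapsto y$, $(y,1)\mapsto y_0$ along $Y\to\{o\}$ produces a contraction. The $(\Rightarrow)$ direction---a nice contractible $Y$ satisfies RLP against every member of $\{M\to\Lambda\}^l$---is the main obstacle I expect, and is where I would invoke either the homotopy extension property for closed inclusions of nice spaces or Michael's continuous selection theorem summarised in \S\ref{s.3.1}; pure diagram chasing will not suffice here, because $\{M\to\Lambda\}^l$ is potentially strictly larger than the class of sphere-disk pairs and one genuinely needs a topological extension theorem to extend partial maps into an arbitrary contractible nice target along closed inclusions of nice spaces.
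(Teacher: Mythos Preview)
Your treatment of (2) matches the paper. For (3), your forward direction via $T_5$ separation is the right idea, but your converse is incomplete: ``represents a closed subset'' means the map is injective, has closed image, \emph{and} the topology on $A$ is induced from $X$. Your counterexample only establishes closedness of the image. The paper handles the other two requirements with two further choices of $(\phi,\psi)$: to force injectivity, send two distinct points of $A$ to the two open points of $M$ while $X$ collapses to the open point of $\Lambda$; to force induced topology, send an open $U\subset A$ to an open point of $M$ while $X$ again collapses to the open point of $\Lambda$.

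The real gap is (1)$(\Rightarrow)$. Neither HEP nor Michael's theorem will close it, and this is not a matter of effort: by your own item (2), $\{M\to\Lambda\}^l$ contains $\emptyset\to X$ for \emph{every} normal $X$, and more generally (by (3)) closed inclusions into merely normal or hereditarily normal spaces with no paracompactness, no dimension bound, no CW structure. Michael's Theorem~\ref{MTh1.2} needs the base paracompact of finite Lebesgue dimension; HEP for closed pairs needs a cofibration hypothesis you do not have. The paper's route is different and worth knowing: first reduce to $Y=[0,1]$ using that $^r$-orthogonals are closed under products and retracts, together with the fact that a contractible finite CW complex is a retract of some cube $[0,1]^n$; then, for $[0,1]$, exhibit $[0,1]\to\{o\}$ as a retract of an inverse limit $\Lambda_\omega\to\{o\}$, where each $\Lambda_{2n}\to\Lambda_n$ is assembled from pullbacks of the single map $M\to\Lambda$ (subdividing intervals). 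This is the classical Tietze argument transcribed into finite spaces, and it stays inside the closure properties of $\{M\to\Lambda\}^{lr}$ rather than invoking an external extension theorem.

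Your $(\Leftarrow)$ for (1), lifting against $Y\times\{0,1\}\hookrightarrow Y\times[0,1]$ to produce a contraction directly, is a pleasant alternative to the paper's route (which goes via ``trivial Serre fibration $\Rightarrow$ weakly contractible $\Rightarrow$ contractible by Whitehead''); both are valid once (3) is in hand.
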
\noindent
In (1), ``nice'' may be taken to mean ``being a finite CW complex''.\footnote{As pointed out by Tyrone at \href{https://mathoverflow.net/questions/409266/are-trivial-fibrations-of-finite-cw-complexes-soft-for-normal-maps}{mathoverflow.net}, 
``nice'' may not taken to mean
being a CW complex: Let $C\mathbb{N}$ be the cone over a countably infinite discrete complex (this is a contractible 1-dimensional polyhedron). 
van Douwen and Pol [van Douwen, Eric K.; Pol, Roman. Countable spaces without extension properties. 
Bull. Acad. Polon. Sci. Sér. Sci. Math. Astronom. Phys. 25 (1977), no. 10, 987--991.] 
have constructed a countable regular $T_2$ space $X$ (which is thus perfectly normal) 
and a function $A\to C\mathbb{N}$, 
defined on a certain closed $A\subset X$, 
which does not extend over any neighbourhood in $X$. 
In particular, the map of countable complexes $C\mathbb{N}\to\{o\}$ 
is both a Hurewicz fibration and a homotopy equivalence, but is not soft wrt all perfectly normal pairs.} 
What we need is
that $Y$ is a retract of some Euclidean space $\mathbb{R}^n$ iff $Y$ is weakly contractible. 

Of course, this Lemma tempts a conjecture
\begin{conj}\label{trivialMcVlr} A map of ``nice'' spaces is a trivial fibration iff 
it belongs to $\left\{ M\to \Lambda\right\}^{lr}$.
\end{conj}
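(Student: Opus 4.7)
The plan is to establish an equivalence between two classes of maps of nice spaces: trivial fibrations in the sense of (wf), and members of $\{M \to \Lambda\}^{lr}$. The key observation is that both classes are defined as right orthogonals -- trivial fibrations against nice closed inclusions, and $\{M \to \Lambda\}^{lr}$ against $\{M \to \Lambda\}^l$ -- so the conjecture reduces to matching these two classes of left-hand maps after taking right orthogonals in the nice category.

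One direction, that any $p \in \{M \to \Lambda\}^{lr}$ is a trivial (Serre) fibration, is already part of Lemma \ref{s.2.2.1L}. To strengthen it to the full (wf) notion one shows that $\{M \to \Lambda\}^l$ contains every nice closed inclusion, which is a mild extension of Lemma \ref{s.2.2.1L}(3): given $A$ Hausdorff, closed in a hereditary normal Hausdorff $X$, the lifting $A \hookrightarrow X \rtt M \to \Lambda$ holds by the normality/Tietze argument sketched there. Hence $\{M \to \Lambda\}^{lr}$ is contained in the right orthogonal of nice closed inclusions, i.e.\ in the class of trivial fibrations of nice spaces.

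For the converse, suppose $p: Y \to B$ is a trivial fibration of nice spaces, and let $i: A \to X$ lie in $\{M \to \Lambda\}^l$. I would first characterise $\{M \to \Lambda\}^l$ on the nice category by unpacking $i \rtt M \to \Lambda$ diagrammatically, as in the normality discussion of the introduction: a square encodes two disjoint ``closed'' subsets of $X$ whose pullbacks to $A$ admit separating neighbourhoods, and the diagonal furnishes separating neighbourhoods downstairs. Iterating this analysis should force $i$ to embed $A$ as a closed subspace of a nice (hereditary normal) $X$, possibly up to retract, at which point (wf) supplies the desired lift $X \to Y$ against $p$.

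The main obstacle, and the reason this remains a conjecture, is choosing the nice category so that the whole argument closes up. On the source side one wants Michael's continuous selection theorem \cite[Thm.1.2]{6} to apply -- paracompactness and finite Lebesgue dimension on $X$ -- so that fibrewise contractibility of $p$ can be upgraded to an actual lift against arbitrary closed inclusions; on the target side one needs the fibres of $p$ to be ANR-like. At the same time, pathologies such as the van Douwen--Pol space of the footnote to Lemma \ref{s.2.2.1L} must be excluded, since they obstruct extension even for $C\mathbb{N} \to \{o\}$ and would break the ``$\subseteq$ trivial fibration'' direction. Pinning down a nice class on which both halves are simultaneously tight, and in particular showing that $\{M \to \Lambda\}^l$ is no larger than the class of nice closed inclusions there, is where I expect the bulk of the work to lie.
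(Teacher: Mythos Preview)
The statement you were given is a \emph{conjecture} in the paper, not a theorem; the paper offers no proof of it and indeed frames the choice of an adequate ``nice'' category as an open problem. Your proposal correctly identifies this: it is an outline of a strategy together with an honest account of the obstacles, not a completed argument.

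That said, the paper does establish a restricted version in Lemma~\ref{McVlrfwf}(wf)$'$, working in the full subcategory of paracompact spaces of finite Lebesgue dimension and taking ``very nice'' to mean, e.g., a map of finite CW complexes. The argument there runs exactly along the lines you sketch: for the forward direction one uses that $\mathbb{S}^n\hookrightarrow\mathbb{D}^{n+1}$ lies in $\{M\to\Lambda\}^l$; for the converse one observes that since $Y,B$ are Hausdorff, any $i\in\{M\to\Lambda\}^l$ may be replaced by a closed inclusion of Hausdorff spaces (via Lemma~\ref{s.2.2.1L}(3)), and then Michael's selection theorem (Theorem~\ref{MTh1.2}) supplies the lift. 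So your plan matches the paper's partial result.

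Where your outline is slightly off is in the step ``iterating this analysis should force $i$ to embed $A$ as a closed subspace\ldots\ possibly up to retract''. The paper does not attempt to control arbitrary $i\in\{M\to\Lambda\}^l$ intrinsically; instead it uses that the \emph{targets} $Y,B$ are Hausdorff to reduce to the Hausdorff quotient of $A\to X$, and only then invokes Lemma~\ref{s.2.2.1L}(3). Characterising $\{M\to\Lambda\}^l$ in the full category of topological spaces is not attempted and is likely hard; the subcategory trick sidesteps it. Your identification of the van Douwen--Pol obstruction and the tension between Michael's hypotheses on the source and ANR-type hypotheses on the target is exactly the difficulty the paper leaves open.
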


\begin{proof} Recall that 
$$ \left\{ M\to \Lambda \right\}^{lr}=
\left\{\,Y\xra p B\,:\,  A\xra i X \rtt Y\xra p B \text{ whenever } A\xra i X \rtt M\to\Lambda \right\} $$
Thus, to see that $\left\{ M\to \Lambda\right\}^{lr}$ is a class of trivial Serre fibrations 
it is enough to verify that $\mathbb{S}^n\to \mathbb{D}^{n+1}\in  \left\{ M\to \Lambda\right\}^{l}$,
where $\mathbb{S}^n\to \mathbb{D}^{n+1}$ denotes the standard embedding of an $n$-sphere into the $n+1$-ball as the boundary.
We skip this, and only remark that to verify that 
$\mathbb{S}^n\to \mathbb{D}^{n+1} \rtt M\to \Lambda$ we need to use that $\mathbb{D}^{n+1}$ is hereditary normal.\footnote{
Namely, use the following characterisation: a space is hereditary normal iff whenever each of two disjoint subsets can be 
	separated from the other by an open neighbourhood, they have disjoint open neighbourhoods. \cite{3} 
	represents this as a lifting property.} 
(2). To give a map $X\lra \Lambda$ is to give two disjoint closed subsets of $X$; 
to give a lifting to $M$ is to find their disjoint neighbourhoods.
(1). It is enough to show that for $Y=[0,1]$: 
indeed, ${}^r$-orthogonals are closed under products and retracts, and any contractible finite CW complex
is a retract of some $[0,1]^n$, $n>0$ \cite{10}.  The proof for $Y=[0,1]$ we give is the standard proof of the Tietze extension theorem 
retold in a diagram chasing notation. 

 Represent the interval $[0,1]$ as a union
 $$[0,1]= \{0\}\cup (0,t_1) \cup \{t_1\} \cup (t_1,t_2) \cup 
...\cup (t_{n-1},1)\cup \{1\}$$ 

Contract the open intervals to (open) points, and denote the resulting map by 
$[0,1]\to \Lambda_n$ where $\Lambda_n=
\left\{\underset{0}{}{\swarrow} \overset{\raisebox{1pt}{$t_{0,1}^-$}}{}{\searrow} \underset{t_1}{}{\swarrow}\overset{\raisebox{1pt}{$t_{1,2}^-$}}{}{\searrow}\underset{t_2}{}{\swarrow}....{\searrow}
\underset{t_{n-2}}{}{\swarrow} \overset{\raisebox{1pt}{$t_{n-2,n-1}^-$}}{}{\searrow} \underset{t_{n-1}}{}{\swarrow}\overset{\raisebox{1pt}{$t_{n-1,n}^-$}}{}{\searrow}\underset{1}{}
\right\}$.  Subdividing the open intervals gives maps $\Lambda_{2n}\to \Lambda_n$.
The map $\Lambda_1=M\to \Lambda=\Lambda_0$ corresponds to subdividing a single open interval into two. 
Use that $^r$-orthogonals are closed under pullbacks to see that $\Lambda_{2n}\to \Lambda_n \in \left\{ M\to \Lambda \right\}^{lr} $,
and that $^r$-orthogonals are closed under inverse limits to see that $\Lambda_\omega\to \Lambda \in \left\{ M\to \Lambda \right\}^{lr} $
where $\Lambda_\omega:=\lim\limits_{\Lambda_{2n}\to\Lambda_n} \Lambda_{2^n}$
and that $^r$-orthogonals are closed under composition to see that 
$\Lambda_\omega\to \Lambda \in \left\{ M\to \Lambda \right\}^{lr} $,
and that $^r$-orthogonals are closed under composition to see that 
$\Lambda_\omega\to \{o\} \in \left\{ M\to \Lambda\right\}^{lr}$
as $ \Lambda\to\{o\}$ is a retract of $\Lambda_4\to \Lambda_2$. 
 Finally, the maps $[0,1]\to \Lambda_n$ induce an embedding $[0,1]\to \Lambda_\omega$ of $[0,1]$
 into $\Lambda_\omega$ as a retract, hence, an orthogonals are closed under retract, 
 we get the required result. 
(3). Pick a map sending $X$ to the open point of $\Lambda$, and the separating neighbourhoods
of two distinct points of $A$ to the two open points of $M$. A lifting would provide separating neighbourhoods
of their images. Therefore, the map $A\to X$ is injective. To see that it is closed, 
pick a map sending the whole of $A$ to the closed point in the ``middle'' of $M$,
and an arbitrary point $x$ of $X-A$ into a closed point of $\Lambda$. 
 A lifting would provide neighbourhood of $x$ disjoint from $A$. To see that the topology on $A$ is induced,
Pick a map $X\to \Lambda$ sending $X$ to the open point of $\Lambda$, and 
a map $A\to M$ sending an arbitrary open subset  $U$ of $A$ 
into an open point of $\Lambda$.  A lifting would provide 
an open subset of $X$ whose intersection with $A$ is $U$.
 \end{proof}

\subsection{The naive defining lifting property of a fibration}\label{s.2.3}\label{s2.3}
If all spaces were ``nice'', we could perhaps define fibrations and trivial fibrations as follows:
\begin{itemize}

\item[(wf)] a map $p:Y\to B$ is a trivial fibration 
iff the lifting property  
$A\to X \rtt  Y\xra p B$
holds whenever $A\subset X$ is a closed subset of a space $X$. 

\item[(f)] 
a map $p:Y\to B$ is a fibration  
iff, whenever $A\subset X$ closed, for any lifting problem  
$A\to X \rtt  Y\xra p B$, 
there exists a diagonal lifting  defined on some open neighborhood of $A$.\footnote{
Formally in notation, for any commutative square
$$\xymatrix{ A \ar[r]^{f} \ar@{->}[d]_{i} & {Y} \ar[d]^{p} \\ {X}  \ar[r]^{\phi} & {B} }$$
there is an open $A\subset U\subset X$ and a map $\tilde f_U:U\to Y$ such that the diagram 
$$\xymatrix{ {} & A \ar[r]^{f} \ar@{..>}[dl] \ar@{->}[d]_{} & {Y} \ar[d]^{p} \\ 
{U}\ar@{..>}[r] \ar@{..>}[urr]|{\tilde f_U} & X  \ar[r]|{\phi_{|U}} & {B} }$$
commutes.}
\end{itemize}

In (f), we get the definition of trivial Serre fibration if we restrict $A\subset X$
to be cellular inclusions of finite CW complexes, or indeed just 
the inclusions $\mathbb{S}^n\to \mathbb{B}^{n+1}$ of an $n$-sphere as the boundary of $n+1$-ball, $n\geq 0$.


Michael selection theory (see \S\ref{selection}) says that we do get the standard notions 
of a trivial fibration, and of a fibration, if we take $X$ to vary 
among paracompact spaces of finite  Lebesgue dimension; then  
it is sufficient for $p:Y\to B$ to be  a map of complete metric spaces with uniformly contractible fibres,
i.e.~a map of topological spaces admitting complete metrics 
such that there are $\delta,\varepsilon>0$ such that 
in any fibre any ball of radius $\delta$ is contractible within a ball of radius $\varepsilon$ (in the fibre).
These assumptions come from Michael continuous selection theory \cite[Thm.1.2]{6}, see \S\ref{selection}.

We rewrite (wf) and (f) in the diagram chasing manner
using Lemma~\ref{McVlrfwf} and the notion of non-Hausdorff mapping cone/cylinder,\footnote{
Intuitively, 
this is the usual (Hausdorff) mapping cone 
 ${Y\times [0,1]}{/ \{(y,1)=p(y)\}}$
 where we replaced $[0,1]$ by the two-point Sierpinski-Kolmogorov space  
$\left\{o\rotatebox{-12}{\ensuremath{\to}}\raisebox{-2pt}{\ensuremath{c}}\right\}$.
Formally, 
the {\em non-Hausdorff mapping cone/cylinder} of a map $p:Y\to B$, denoted by $Y_{{}_p\!\!\searrow}\underset{B}{}$, is 
$Y\times \left\{o\rotatebox{-12}{\ensuremath{\to}}\raisebox{-2pt}{\ensuremath{c}}\right\}/\raisebox{-2pt}{\ensuremath{(-,c)=p(-)}}$,
i.e.~the disjoint union $Y \sqcup B$ equipped with the following topology: 
an open subset is either an open subset of $X$,
or the union of an open subset of $B$ and its preimage. 
%
} 

\begin{lemm}\label{McVlrfwf} 
In a full subcategory of ``nice'' topological spaces, 
	\begin{itemize}
		\item[(wf)$'$] a ``very nice'' map is a trivial fibration iff it belongs to 
	$ \left\{ M\to \Lambda \right\}^{lr}$

	\item[(f)$'$] a ``very nice'' map is a fibration iff  
the map from its non-Hausdorff mapping cone to the base belongs to
	$ \left\{ M\to \Lambda \right\}^{lr}$
$$Y_{{}_p\!\!\searrow}\underset{B}{}\to B \,\,\in\,\, \left\{ M\to \Lambda \right\}^{lr}$$
	\end{itemize}	
\end{lemm}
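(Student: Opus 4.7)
The plan is to reduce both (wf)$'$ and (f)$'$ to Lemma~\ref{s.2.2.1L}(3), which in the subcategory of ``nice'' spaces identifies $\{M\to\Lambda\}^l$ with the class of closed subspace inclusions $A\hookrightarrow X$, and then combine this with the naive definitions (wf), (f) and Michael's continuous selection theorem (Thm.~\ref{MTh1.2}).

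Part (wf)$'$ is almost immediate from this dictionary: by Lemma~\ref{s.2.2.1L}(3), $\{M\to\Lambda\}^l$ consists (in the ``nice'' category) of closed inclusions, so $\{M\to\Lambda\}^{lr}$ is by definition the class of maps with the right lifting property against every closed inclusion of ``nice'' spaces, which is exactly (wf). Michael's theorem then upgrades this naive lifting property, for a ``very nice'' $p$ (a map of complete metric spaces with uniformly contractible fibres) and $X$ varying among paracompact spaces of finite Lebesgue dimension, to coincidence with the standard class of trivial Serre fibrations.

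Part (f)$'$ requires first unwinding what a lifting problem for the cone map $Y_{{}_p\!\!\searrow}\underset{B}{}\to B$ actually says. By the definition of the non-Hausdorff mapping cone, a continuous map $h:X\to Y_{{}_p\!\!\searrow}\underset{B}{}$ is the same data as a partition $X=X_Y\sqcup X_B$ with $X_Y$ open and $X_B$ closed, a continuous lift $\tilde h:X_Y\to Y$ of $\phi|_{X_Y}$ through $p$, and $h|_{X_B}=\phi|_{X_B}$. Given a closed inclusion $A\hookrightarrow X$ (identified with an element of $\{M\to\Lambda\}^l$ via Lemma~\ref{s.2.2.1L}(3)) and a map $f:A\to Y_{{}_p\!\!\searrow}\underset{B}{}$, set $A_Y=f^{-1}(Y)$ and $A_B=f^{-1}(B)$. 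Since $A_B$ is closed in $A$ and $A$ is closed in $X$, $A_B$ is closed in $X$, so $X':=X\setminus A_B$ is an open (hence still ``nice'') subspace and $A_Y=A\cap X'$ is closed in $X'$. The cone lifting problem then reduces to finding an open neighborhood of $A_Y\subset X'$ on which $\phi|_{X'}$ lifts through $p$ and extends $f|_{A_Y}$---precisely the clause (f). Conversely, an (f)-lifting problem is the cone problem with $A_Y=A$, $A_B=\emptyset$, so the two conditions are equivalent; Michael's theorem then again promotes the naive notion to the classical definition of a Serre fibration for ``very nice'' $p$.

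The main obstacle will be bookkeeping at two points. First, one has to verify that the class of ``nice'' spaces is closed under passage to the open subspace $X\setminus A_B$ that appears in the reduction above (and similarly that ``nice'' is preserved under the relevant pullbacks along $\phi$). Second, continuity of $h:X\to Y_{{}_p\!\!\searrow}\underset{B}{}$ across the interface between $X_Y$ and $X_B$ must be shown to be automatic from the identity $p\tilde h=\phi|_{X_Y}$ together with the topology of the cone: open sets around $b\in B$ in $Y_{{}_p\!\!\searrow}\underset{B}{}$ are of the form $V\sqcup p^{-1}(V)$ for $V\subset B$ open, so their pullback along $h$ equals $\phi^{-1}(V)$, which is open in $X$. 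This little continuity check is exactly what makes the non-Hausdorff mapping cone the correct combinatorial carrier of the ``open-neighborhood'' clause hidden inside (f), and turns (f)$'$ into a genuine diagram-chasing reformulation rather than an ad hoc definition.
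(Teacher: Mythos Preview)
Your proposal is correct and follows essentially the same route as the paper: identify $\{M\to\Lambda\}^l$ in the ``nice'' subcategory with closed inclusions via Lemma~\ref{s.2.2.1L}(3), unwind the universal property of the non-Hausdorff mapping cone to translate the cone lifting property into the open-neighbourhood clause (f), and then invoke Michael's Theorem~\ref{MTh1.2}. Your reduction for (f)$'$ via the passage to $X'=X\setminus A_B$ is in fact a bit more explicit than the paper's, which records the cone lifting condition as ``there is an open $U\subset V\subset X$ with a lift'' without enforcing $V\cap A=U$ and simply remarks that the outcome is ``almost'' Michael's conclusion; your shrinking step is exactly what makes that ``almost'' disappear.
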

Here, being ``nice'' means being (possibly non-Hausdorff) paracompact of finite Lebesgue dimension, 
and ``very nice'' means say a map of finite CW complexes or being smooth in a suitable sense (we need something to ensure that 
a fibration is necessarily a map of complete metrisable spaces with uniformly locally contractible fibres),
and $Y_{{}_p\!\!\searrow}\underset{B}{}$ denotes the non-Hausdorff mapping cone of $Y\xra p B$.
\begin{proof} 
Recall that 
$$ \left\{ M\to \Lambda \right\}^{lr}=
\left\{\,Y\xra p B\,:\,  A\xra i X \rtt Y\xra p B \text{ whenever } A\xra i X \rtt M\to\Lambda \right\} $$
A map to a Hausdorff spaces necessarily glues together points which cannot be separated by neighbourhoods 
(for their images can if distinct), hence we may assume that both $A$ and $X$ are Hausdorff
and by Lemma~\ref{McVlrfwf}(3) that $A\xra i X$ is the inclusion of a closed subset. 
Hence, (f)$'$ states precisely (f) above, i.e.~the conclusion of 
Michael selection theorem Theorem~\ref{MTh1.2}  for trivial fibrations. 

Similarly, (wf)$'$ is (wf)  using 
the diagram chasing property of the non-Hausdorff mapping cone: 
\begin{itemize}\item is to give a map $X\to \raisebox{3pt}{$Y_{{}_p\!\!\searrow}\underset{B}{}$}$
is the same as to give a commutative square 
$$\xymatrix{ U \ar[r]^{f} \ar@{->}[d]_{i} & {Y} \ar[d]^{p} \\ {X}  \ar[r]^{\phi} & {B} }$$
for some open subset $U$ of $X$.
\end{itemize} 
Indeed, this means that the lifting property $A\xra i X \rtt\,\, \raisebox{3pt}{$Y_{{}_p\!\!\searrow}\underset{B}{}$}\xra p B$ 
of item (f)$'$ holds 
iff    
for any open subset $U$ of $A$ and a commutative square
$$\xymatrix{ U \ar[r]^{f} \ar@{->}[d]_{i} & {Y} \ar[d]^{p} \\ {X}  \ar[r]^{\phi} & {B} }$$
there is an open $U\subset V\subset X$ and a map $\tilde f_V:V\to Y$ such that the diagram 
$$\xymatrix{ U \ar[r]^{f} \ar@{->}[d]_{} & {Y} \ar[d]^{p} \\ {V}  \ar[r]|{\phi_{|V}} \ar@{..>}[ur]|{\tilde f_V}& {B} }$$
commutes. 
 This is almost the conclusion of 
Michael selection theorem Theorem~\ref{MTh1.2}  for fibrations as stated.

Finally, by Lemma~\ref{McVlrfwf}(3), $ \left\{ M\to \Lambda \right\}^{l}$ contains
the inclusion $\mathbb{S}^n\to \mathbb{B}^{n+1}$ of an $n$-sphere as the boundary of $n+1$-ball, $n\geq 0$,
and thereby $ \left\{ M\to \Lambda \right\}^{lr}$ is a subclass of trivial Serre fibrations.
The reader will find it an exercise (check this!) to see that (f)$'$ implies that $Y\xra p B$ is a fibration
under suitable assumptions. 
\end{proof}

\subsection{A naive ``combinatorial'' model structure}\label{s.2.5}
 Considerations above suggest the following conjecture. The idea is to
 use $M\to \Lambda$ to make precise niceness assumptions in 
 the naive lifting property of fibrations. 
\begin{conj}[{$M\to \Lambda$}]\label{s.2.4.2} A closed model structure on the category of topological spaces
is defined as follows:
\begin{itemize}
\item $\left\{ M\to \Lambda\right\}^{l}$ is the class of cofibrations.
\item $\left\{ M\to \Lambda \right\}^{lr}$ is the class of trivial fibrations.
\item $\left\{ Y\xra p B \,:\, 
|Y|<\infty, |B|<\infty, \text{ and }  \raisebox{3pt}{$Y_{{}_p\!\!\searrow}\underset{B}{}$} \in \left\{ M\to \Lambda \right\}^{lr}\right\}^{l}$
is the class of trivial cofibrations. 
\item $\left\{ Y\xra p B \,:\, 
|Y|<\infty, |B|<\infty, \text{ and }  \raisebox{3pt}{$Y_{{}_p\!\!\searrow}\underset{B}{}$} \in \left\{ M\to \Lambda \right\}^{lr}\right\}^{lr}$
is the class of fibrations. 
\item a weak equivalence is the composition of a trivial cofibration with a trivial fibration

\end{itemize}
\end{conj}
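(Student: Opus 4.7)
The plan is to verify, one axiom at a time, that the five classes form a closed model structure on $\mathrm{Top}$. Let $J := \{Y\xra p B : |Y|<\infty, |B|<\infty,\, Y_{{}_p\!\!\searrow}\underset{B}{}\to B\in \left\{ M\to \Lambda\right\}^{lr}\}$ denote the finite generating set of the conjectured trivial cofibrations. Since $\mathrm{Top}$ is complete and cocomplete, and any class defined as a left or right Quillen orthogonal is automatically closed under retracts, the cofibrations $\{M\to\Lambda\}^{l}$, trivial fibrations $\{M\to\Lambda\}^{lr}$, trivial cofibrations $J^{l}$ and fibrations $J^{lr}$ are all retract-closed for free. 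The two lifting axioms hold by construction: for $i\in\{M\to\Lambda\}^{l}$ and $p\in\{M\to\Lambda\}^{lr}$ the definition of the double orthogonal yields $i\rtt p$, and analogously $(J^l, J^{lr})$ lift against each other.

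I would then attack the factorisations via the small object argument. Both $\{M\to\Lambda\}$ and $J$ consist of maps between \emph{finite} topological spaces, so their sources are $\omega$-small with respect to transfinite compositions of pushouts of these maps; care is needed because $\mathrm{Top}$ is not locally presentable, but finiteness of the domains is enough to run the argument at countable ordinals. This yields, for any $f:X\to Y$, factorisations $f = p\circ i$ with $i$ a relative $\{M\to\Lambda\}$-cell complex (hence in $\{M\to\Lambda\}^{l}$) and $p\in\{M\to\Lambda\}^{lr}$, and analogously for $J$. Next come the inclusions $J^l\subset\{M\to\Lambda\}^{l}$ and (equivalently, by $P^{lrl}=P^{l}$) $\{M\to\Lambda\}^{lr}\subset J^{lr}$: this reduces to a finite combinatorial check that every map in $J$ lies in $\{M\to\Lambda\}^{lr}$, which I would verify by case analysis on the non-Hausdorff mapping cone $Y_{{}_p\!\!\searrow}\underset{B}{}$ together with Lemma~\ref{McVlrfwf}. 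The identifications trivial fib $=$ fib $\cap$ weq (and its cofibration dual) would then follow from Quillen's retract trick: if $f$ is a fibration that is also a weak equivalence, factor $f=p\circ i$ with $i\in J^l$ and $p\in\{M\to\Lambda\}^{lr}$; the lifting $J^l\rtt J^{lr}$ exhibits $f$ as a retract of $p$, hence $f\in\{M\to\Lambda\}^{lr}$.

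The main obstacle, and where the statement is genuinely a conjecture rather than a theorem, is $2$-out-of-$3$ for the weak equivalences. Because the weak equivalences are \emph{defined} as composites of trivial cofibrations with trivial fibrations rather than via an intrinsic homotopical condition, it is not evident that this class is even closed under composition. The strategy I would adopt is to identify, at least on a suitable ``nice'' full subcategory of paracompact spaces of finite Lebesgue dimension, the conjecturally defined weak equivalences with the classical ones, using Lemma~\ref{McVlrfwf} and Michael selection theory (Thm.~\ref{MTh1.2}) to translate $\{M\to\Lambda\}^{lr}$-lifting into a genuine homotopical statement about maps of complete metric spaces with uniformly locally contractible fibres. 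Once this identification is in place, $2$-out-of-$3$ and retract-closure for weak equivalences are inherited from the classical theory. Extending the identification from the nice subcategory to all of $\mathrm{Top}$ is precisely the issue flagged by Conjecture~\ref{trivialMcVlr}, and I expect that either additional niceness hypotheses on the ambient category, or a more delicate diagram-chasing analysis of the maps $\Lambda_n\to\Lambda_0$ used in the proof of Lemma~\ref{s.2.2.1L}(1), will be required to close the argument.
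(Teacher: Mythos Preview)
The paper offers no proof of this statement: it is explicitly labelled a \emph{Conjecture}, and immediately after it the authors pose the factorisation axiom as a \emph{second}, independent conjecture (``Conjecture~M2''). So there is no paper proof to compare against; what you have written is a proof sketch for an open problem, and you yourself flag the $2$-out-of-$3$ axiom as the point where the argument becomes genuinely conjectural. That diagnosis is correct, but the difficulty appears earlier than you think.

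Your small-object argument does not produce the required factorisations. Running the small object argument on the set $I=\{M\to\Lambda\}$ yields the weak factorisation system $(I^{rl},\,I^{r})$, i.e.\ relative $I$-cell complexes followed by maps in $\{M\to\Lambda\}^{r}$. The conjecture, however, asks for the pair $(\{M\to\Lambda\}^{l},\,\{M\to\Lambda\}^{lr})$, and in general $I^{r}\supsetneq I^{lr}$: the second factor you obtain need not lie in the smaller class $\{M\to\Lambda\}^{lr}$. The same objection applies verbatim to your use of $J$. To get the WFS $(\{M\to\Lambda\}^{l},\{M\to\Lambda\}^{lr})$ by a small object argument one would need a \emph{set} of generators for the proper class $\{M\to\Lambda\}^{l}$, and none is in evidence; this is exactly why the paper isolates the factorisation statement as its own Conjecture~M2 rather than claiming it follows from smallness of $M$ and $\Lambda$.

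There is a second slip in your reduction of the inclusion $J^{l}\subset\{M\to\Lambda\}^{l}$. You claim it is equivalent to $J\subset\{M\to\Lambda\}^{lr}$, but orthogonals reverse inclusions: from $J\subset\{M\to\Lambda\}^{lr}$ one deduces $\{M\to\Lambda\}^{l}=\{M\to\Lambda\}^{lrl}\subset J^{l}$, which is the opposite of what a model structure requires. The correct reduction is to $M\to\Lambda\in J^{lr}$, a different (and not obviously finite) check.
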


The language of this conjecture is purely combinatorial. Can we define a model category of ``formal'' topological spaces
(``formal'' as in formal power series), i.e.~a model category whose objects and arrows belong to 
a calculus of diagram chasing computations, so to say ? 
A naive hope is that the size of spaces appearing in the Quillen orthogonals (negations) representing basic notions of topology \cite{2,3}
is small enough ($<7$) to make feasible the exponential growth in the computer processing of such a calculus. 

The following would represent a rule in such a diagram chasing calculus of formal topological spaces.
\begin{conj}[M2] For each finite set $P$ of maps of finite spaces, and each string consisting of letters $l$ and $r$,
each map in the category of topological spaces decomposes as a map in $(P)^{sl}$ followed by a map in $(P)^{slr}$, 
and as a map  in $(P)^{srl}$ followed by a map in $(P)^{sr}$:
$$
\xymatrix{ {}  & {\cdot} \ar@{..>}[rd]|{(P)^{slr}} & \\ {\cdot} \ar[rr]|\forall \ar@{..>}[ur]|{(P)^{sl}} & & \cdot }
\xymatrix{& & & }
\xymatrix{ {}  & {\cdot} \ar@{..>}[rd]|{(P)^{sr}} & \\ {\cdot} \ar[rr]|\forall \ar@{..>}[ur]|{(P)^{srl}} & & \cdot }$$

\end{conj}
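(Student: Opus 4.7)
The plan is to deduce the conjecture from Quillen's small object argument (SOA). Since $P$ is a finite set of morphisms between finite topological spaces, the sources of maps in $P$ are small objects of $\mathbf{Top}$ with respect to the transfinite compositions used in SOA. Applied to $P$ itself, the standard SOA factors every morphism $f$ in $\mathbf{Top}$ as $f = p \circ i$ where $i$ is a transfinite composite of pushouts of coproducts of maps in $P$ (so $i \in (P)^{rl}$) and $p \in (P)^r$. This handles the second factorization in the case of the empty string $s$. The companion decomposition $(P)^l \circ (P)^{lr}$ for $s = $ empty can be obtained by the dual SOA in $\mathbf{Top}$, or equivalently by extracting from $(P)^l$ a small generating set $J$ with $J^r = (P)^{lr}$ and running the forward SOA on $J$.

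For a general string $s$, I would first invoke the collapse identities $X^{lrl} = X^l$ and $X^{rlr} = X^r$, which reduce the tower of iterated orthogonals of $P$ to finitely many genuinely distinct shapes. In each such case, the conjectured factorization is a standard M2-type decomposition for the $(l,r)$-closed pair $((P)^{sl}, (P)^{slr})$, and the task reduces to supplying a small set $J(s) \subseteq (P)^{sl}$ with $J(s)^r = (P)^{slr}$ (and dually for the second factorization). I would attempt to produce $J(s)$ by bounding the cardinality of sources and targets of its members in terms of $|P|$ and the nesting depth of $s$, then verifying by a retract/cell argument that the bounded subset generates the same right-orthogonal class as $(P)^{sl}$.

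The main obstacle I anticipate is precisely this last step: producing small generating sets for higher iterated orthogonals. While $(P)^r$ is tautologically generated by the finite set $P$, the classes $(P)^{ll}$, $(P)^{rr}$, $(P)^{rrl}$ and so on are defined as orthogonals of proper classes, and their accessibility is a L\"owenheim--Skolem-style statement that may well fail in all of $\mathbf{Top}$. A natural retreat---already foreshadowed throughout the paper by the recurring ``nice'' assumptions---is to restrict to a locally presentable subcategory of topological spaces, where accessibility of such orthogonals is automatic; within such a subcategory both factorizations then follow uniformly by SOA applied to $J(s)$. Whether the conjecture holds as stated in the whole category of topological spaces, or only in such a subcategory, seems to me the delicate point.
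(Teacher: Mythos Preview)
The paper offers no proof of this statement: it is explicitly labelled a \emph{Conjecture} and introduced only as a desideratum (``the following would represent a rule in such a diagram chasing calculus of formal topological spaces''). There is nothing in the paper to compare your attempt against; you are attacking what the authors leave open.

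On the substance of your attempt: the forward small object argument does run in $\mathbf{Top}$ for any \emph{set} of maps (every space is small relative to sufficiently long transfinite compositions), so the $(P)^{rl}/(P)^r$ factorisation for the empty string $s$ is fine. Two points, however, are genuine gaps rather than merely acknowledged difficulties. First, your ``dual SOA in $\mathbf{Top}$'' is not available --- $\mathbf{Top}^{\mathrm{op}}$ lacks the cosmallness hypotheses the argument needs --- so already the companion $(P)^l/(P)^{lr}$ factorisation for $s$ empty requires exactly what you later call the main obstacle, namely exhibiting a small $J$ with $J^r=(P)^{lr}$; this is not ``equivalent'' to a dual SOA but is the hard content itself. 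Second, the claim that the identities $X^{lrl}=X^l$ and $X^{rlr}=X^r$ reduce the tower to ``finitely many genuinely distinct shapes'' is not justified: strings such as $ll$, $rr$, $llr$, $rrl$, $llrr$, $l^n$, \ldots contain neither $lrl$ nor $rlr$ as a substring and do not collapse further, so infinitely many cases survive. Your cardinality-bounding heuristic for producing $J(s)$ is a reasonable line of attack, but as stated it is a hope, not an argument; you correctly identify this as the crux, and the paper leaves it there too.
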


Of course, the real temptation is to develop a computer algebra system doing topology using a syntax
extending the concise syntax for topology we discuss, and to use it in teaching. 

\section{Appendix.}

\subsection{Appendix. Michael continuous selections\label{selection}}\label{s.3.1}
We sketch the statement of the Michael continuous selections theorem \cite[Thm.1.2]{6} we use, see also \cite{5,7}.

Let $(F_x)_{x\in X}$ be a family of non-empty subsets of a topological space $Y$. 
Michael selection theory thinks of such a family as a multivalued function $\phi:X\to 2^Y$
and refers to the family as a {\em carrier}.
Michael selection theory gives sufficient conditions for existence of 
a continuous choice function $f(x)\in F_x,x\in X$. 
These conditions are satisfied when the family $(F_x)_{x \in X}$ 
is the family of fibres of a fibration of "nice" spaces.   
\cite{7} considers families of convex subsets of a Banach space 
but we do not discuss it here.

The family $(F_x)_{x\in X}$ is {\em lower semi-continuous} iff, whenever $U\subset Y$ is open in $Y$, 
the subset $\{x \in X : F_x \cap U \neq \emptyset\}$ is open in $X$. This subset 
can be thought of as the preimage of $U$ under the  multivalued function $(F_x)_{x\in X}$.

The family $(F_x)_{x\in X}$  is
 {\em uniformly locally $n$-contractible} iff, for every $x\in X$ and every $y \in F_x$,
 and every neighbourhood $U\in y$  of $y\in Y$, there exists a neighbourhood
$V\ni y$ of $y \in Y$ such that, for every $F_{x'}, x'\in X$, 
every
continuous image of an $m$-sphere ($m \leq n$) in $F_{x'} \cap V$ 
is contractible in $F_{x'} \cap U$. By convention, as there are no $m$-spheres for $m<0$,
we assume each family is 
uniformly locally $-1$-contractible. 
As a diagram  
$\forall x\in X\, \forall y \in F_x \,\forall U_y\ni y \, \exists V_y\ni y,\,V_y\subset U_y
\forall x' \in X$
$\xymatrix{ \mathbb{S}^n \ar[d]\ar[r]^\forall & F_{x'}\cap V_y\ar[d] \\
\mathbb{B}^{n+1} \ar@{-->}[r]^\exists & F_{x'}\cap U_y \\ }$

Let $\dim X$ denote the {\em Lebesgue (covering) dimension}; i.e., for normal space $X$, 
$\dim X \leq n$ iff  $A\to X \rtt \mathbb{S}^n\to \{o\}$ 
for every closed subset $A\subset X$. A space is paracompact iff every open covering has a locally finite subcovering.\footnote{The usual definition is in terms of open coverings. 
We combine \cite[\S9]{7} and \cite{8}:

 "A {\em open covering} of a topological space $X$ is, in [\cite{9}], a collection of open subsets
of X whose union is X. Its elements need
not be open unless that is specifically assumed. A {\em refinement} of a covering $\mathcal U$ is a covering $\mathcal V$ such that
every $V \in \mathcal V$ is a subset of some $U \in \mathcal U$. A covering $\mathcal U$ is {\em point-finite}
if every $x \in X$
is an element of only finitely many $U \in \mathcal U$, it is {\em locally finite} if every $x \in X$ has a
neighbourhood intersecting only finitely many $U \in \mathcal U$. 

Call a collection $\mathcal U$ of subsets of a topological space {\em closure-preserving} 
if, for every subcollection $\mathcal V\subset \mathcal U$ the union of closures is the
closure of the union (i.e.~$\cup \{ \bar U: U\in \mathcal U\}=[\cup \{  U: U\in \mathcal U\}]^-$). Any locally
finite collection is certainly closure-preserving, but the converse is
generally false even for discrete spaces. 

A Hausdorff space $X$ is called {\em paracompact} iff every
open covering of $X$ has a locally finite open refinement. 
For a regular space it is equivalent to require only that  
every open covering has a closure-preserving refinement \cite[Thm.1]{9}.

By dimension, or $\dim$, we mean the {\em Lebesgue (covering) dimension}; i.e., 
$\dim X \leq n$ 
iff every finite open covering $\mathcal U$ of $X$ has a finite, open refinement $\mathcal V$ of order $\leq n$ 
(i.e.~every $x \in X$ is in at most $n + 1$ elements of $\mathcal V$). If $A \subset X$ is closed, then we say that
$\dim_X(X - A) \leq n$ if $\dim(C) \leq n$ for every $C \subset X - A$ which is closed in $X$; for metric $X$,
this is equivalent to $\dim(X - A) \leq n$. 
}\footnote{These notions can probably be expressed as lifting properties as follows.
To give a finite open, resp.~closed, covering $\mathcal U$  is to give a map $X\lra \{\mathcal V\,:\,\emptyset\neq  \mathcal V \subset \mathcal U \} $
where the topology is defined by the order $\mathcal V_1\to \mathcal V_2$ iff $\mathcal V_1\supset \mathcal V_2$, resp.~$\mathcal V_1\subset \mathcal V_2$.
To give a finite open covering $\mathcal U$ of order $\leq n$ is to give a map $X\lra \{\mathcal V\,:\,\emptyset\neq  \mathcal V \subset \mathcal U, |\mathcal V|\leq n+1 \} $.
A finite open covering $\mathcal U$ of $X$ has a finite, open refinement $\mathcal V$ of order $\leq n$ 
iff $\emptyset\to X \,\rtt\, \{(\mathcal W,\mathcal V): \emptyset\neq \mathcal W\subset \mathcal V \subset \mathcal U, |\mathcal W|\leq n+1 \}\to \{\mathcal V\,:\,\emptyset\neq  \mathcal V \subset \mathcal U \} $
 where the topology is generated by the orders
$(\mathcal W_1,\mathcal V_2)\to (\mathcal W_2,\mathcal V_2)$ iff $\mathcal W_1\supset \mathcal W_2$ and $\mathcal V_1\supset \mathcal V_2$, and  
$\mathcal V_1\to \mathcal V_2$ iff $\mathcal V_1\supset \mathcal V_2$.

To give a point-finite closure-preserving closed covering $\mathcal U$ of $X$ is to give a map 
$X\lra \{\mathcal V\,:\,\emptyset\neq  \mathcal V \subset \mathcal U, |\mathcal V|< \omega \} $
where the topology is defined by the order
$\mathcal V_1\to \mathcal V_2$ iff $\mathcal V_1\subset \mathcal V_2$(sic!).
An open covering $\mathcal U$ has a point-finite closure-preserving refinement $\mathcal V$ iff
$\emptyset\to X \,\rtt\, \{(\mathcal W,\mathcal V): \emptyset\neq \mathcal W\subset \mathcal V \subset \mathcal U, |\mathcal W|<\omega \}\to \{\mathcal V\,:\,\emptyset\neq  \mathcal V \subset \mathcal U \} $ where the topology on the domain is defined by order
$(\mathcal W_1,\mathcal V_2)\to (\mathcal W_2,\mathcal V_2)$ iff $\mathcal W_1\subset \mathcal W_2$(sic!) and $\mathcal V_1\supset \mathcal V_2$,
on the target by the open subsets 
$\{\mathcal V\subset \mathcal U: U\in \mathcal V\}$, for $U\in\mathcal U$.
}

\begin{theo}[{\cite[Thm.1.2]{6}}]\label{MTh1.2} 
Let $X$  be a paracompact Hausdorff space,
$A \subset X$ closed with $\dim_X(X - A) \leq n + 1$, 
and let $(F_x)_{x\in X}$ be a uniformly locally $n$-contractible family 
of non-empty closed subsets of a complete metric space $Y$. 

Then every continuous choice function on $A$ extends to a 
continuous choice function on an open neighborhood of $A$.
Moreover, if every $F_x,x\in X$ is $n$-contractible, then
every continuous choice function on $A$ extends to a 
continuous choice function on the whole of $X$.
\end{theo}

We repeat the conclusion in notation: 
for every continuous choice function $f:A\to Y$ such that $f(x)\in F_x$ whenever $x\in A$,
there is an open neighbourhood $U\subset A$ of $A$ and a continuous choice function 
$\tilde f:U\to Y$ such that $\tilde f(u) \in F_u$ whenever $u\in U$, 
and $f(a)=\tilde f(a)$ whenever $a\in A$.

\subsection{Extending maps to compact spaces}\label{sBourbaki}\label{compacts}

We explain  in more detail the proof in \S\ref{s.2.2} of the characterisation of compactness.  
The reader may find a verbose exposition focusing on logical ideas in \cite[\S2.2]{mintsGE}. 

\subsubsection{Compactness via ultrafilters by Bourbaki}

%
Item $\verb|d)|$ of the following characterisation of proper maps by Bourbaki \cite{Bourbaki} states almost a lifting property.
Arguably, this suggests that the ideas/technique of category theory were present in \cite{Bourbaki}, 
although not the notation or language of category theory. 
\newline\noindent \includegraphics[width=\linewidth]{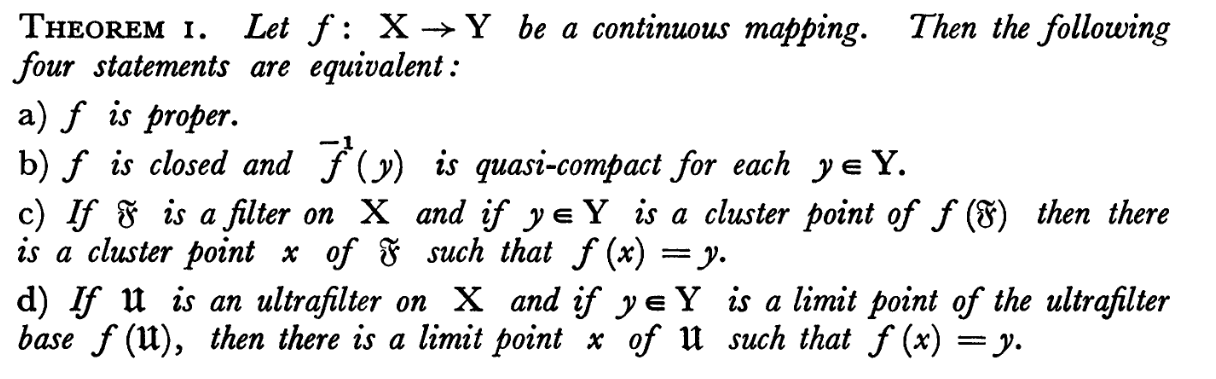}
Item $\verb|d)|$ expresses the following lifting property (almost): 
$|X| \lra |X|\sqcup_{\mathfrak U}\{\oo\} \,\rtt\, X\xra f Y $
where $|X|$ denotes the set of points of $X$ equipped with discrete topology,
and the topology on $|X|\sqcup_{\mathfrak U}\{\oo\}$ is such that 
$\mathfrak U$ is the neighbourhood filter of $\oo$, and the induced topology on 
subset $|X|$ is discrete \cite[I\S6.5, Def.5, Example]{Bourbaki}. 
\subsubsection{Extending maps to compact Hausdorff spaces}
The theorem of Vulikh-Smirnov-Taimanov \cite[3.2.1,p.136]{Engelking} is stated 
in the language of lifting properties almost explicitly (``compact'' below stands for ``compact Hausdorff''):\vskip3pt
\noindent \includegraphics[width=\linewidth]{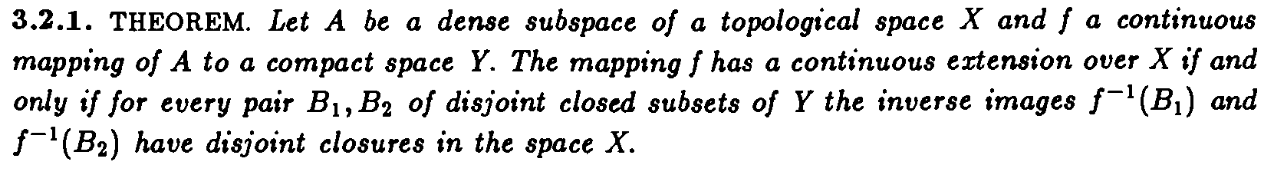}

Let us transcribe this to the language/notation of finite topological spaces and lifting properties. 
We are given {\sf a dense subspace $A\xra i X$ of a topological space $X$}  and 
{\sf a continuous mapping $A\xra f Y$ of $A$ to a [Hausdorff] compact space $Y$}. 
{\sf The mapping \ensuremath{f} has a continuous extension over \ensuremath{X}}
means that the arrow $A\xra f Y$ factors via $A\xra i X$ (cf.~Figure 2f). 
A {\sf   pair $\boldsymbol{B_1}$, $\boldsymbol{B_2}$  of disjoint closed subsets of \ensuremath{Y}} is an arrow 
$Y\lra  \{\boldsymbol{B_1}\leftarrow O \rightarrow \boldsymbol{B_2}\}$
where $\{\boldsymbol{B_1}\leftarrow O \rightarrow \boldsymbol{B_2}\}$ is the space with 
one open point denoted by $O$ and two closed points
denoted by $\boldsymbol{B_1}$ and $\boldsymbol{B_2}$.
To say {\sf  the inverse images $\boldsymbol{f^{-1}(B_1)}$
and $\boldsymbol{f^{-1}(B_2)}$ have disjoint closures in the space $X$} is to say that 
the composition 
$A\xra f Y \lra \{\boldsymbol{B_1}\leftarrow O \rightarrow \boldsymbol{B_2}\}$ factors as $A \xra i X
\lra  \{\boldsymbol{B_1}\leftarrow O \rightarrow \boldsymbol{B_2}\}$ (cf.~Figure 2g).

Now we need to define the class of  dense subspaces. 
A dense subspace is an injective map with dense image 
such that the topology on the domain is induced from the target.
This suggests we try to define this class by
taking left Quillen negations (orthogonals) of the simplest archetypal examples of 
a map whose image is not dense ($\{U\} \lra  \{ U \rightarrow U' \}$), a non-injective map ($\{x\llrra y\} \lra  \{x=y\}$), 
and a map such that the topology on the domain is not induced from the target ($\{o \rightarrow c \} \lra  \{o=c\}$). 


Doing so leads to the following  reformulation. 
\begin{theo}
Let $Y$ be Hausdorff compact and let $A\xra i X$ satisfy (cf.~Figure 2(ijk))
\begin{enumerate}
\item[(i)]  (dense) $A\xra i X \rtt  \{U\} \lra  { \{ U \rightarrow U' \} } $
\item[(ii)] (injective)  $A\xra i X \rtt { \{x\llrra y\} } \lra  \{x=y\}$
\item[(iii)] (induced topology) $A\xra i X \rtt { \{o \rightarrow c \} } \lra  \{o=c\} $
\end{enumerate}
Then the properties of $A\xra f Y$ defined by Figure 2(f) and Figure 2(g) are equivalent.
\end{theo}

This implies that, for Hausdorff compact $Y$, 
items   3.2.1(i-iii) and $ A\xra i X \rtt \{\boldsymbol{B_1}\leftarrow O \ra \boldsymbol{B_2}\}\lra \{\boldsymbol{B_1} = O= \boldsymbol{B_2}\}
$ imply that $ A\xra i X \rtt Y\lra \{\bullet\}$.

Further, note that if $X=A\sqcup\{\oo\}$ is obtained from $A$ by adjoining a single closed non-open point, then 
$$ A\xra i X \rtt \{\boldsymbol{B_1}\leftarrow O \ra \boldsymbol{B_2}\}\lra \{\boldsymbol{B_1} = O= \boldsymbol{B_2}\}$$
iff there exists an ultrafilter $\mathfrak U$ such that $A \xra i X$ is of form $A \lra A\sqcup_{\mathfrak U}\{\oo\}$.

This implies that maps of form  $A \lra A\sqcup_{\mathfrak U}\{\oo\}$ are in $P^l$ and, finally, 
that a Hausdorff space $K$ is quasi-compact iff $K\lra\{\bullet\}$ is in $P^{lr}$ where
$P$ consists of
\begin{center}$ 
{ \{\boldsymbol{B_1}\leftarrow O\rightarrow \boldsymbol{B_2}\} } \lra  {\{\bullet\} } \ \ \ \  \ \ \ \ \
 \{U\} \lra  { \{ U \ra U' \} }   $ \\  
$
{ \{x\llrra y\} } \lra  \{x=y\} \ \ \ \ \ \ \ \ \  \ \ \ \ \ \ 
{ \{o \ra c \} } \lra  \{o=c\}   
$
\end{center}
\subsubsection{A logical point of view: the simplest counterexample negated three times.} 
We took a (the?) simplest possible non-proper map, took Quillen negation thrice (although once passing 
to the subclass of finite spaces), and got (almost?) the definition of a proper map. 

Let us explicitly state the conjecture. 

\begin{enonce*}{Conjecture}[{$(\{ \{o\}\lra \{o\rightarrow c\}\}^{r}_{<5})^{lr}$}] 
In the category of topological spaces, the following Quillen orthogonal (negation) 
defines the class of proper maps:
$$(\{ \{o\}\lra \{o\rightarrow c\}\}^{r}_{<5})^{lr}$$ 
\end{enonce*}

\subsection{Appendix. Properties of the empty subspace of a singleton}\label{s.3.2}
We give a list of properties of maps one can define starting with the 
simplest possible map $\emptyset\to\{o\}$. Note that the notion of connectivity,
discreteness, and quotient arises in this way. 

\cite{2} gives a longer list 
of notions one can obtain in this way starting from more complicated maps
of finite topological spaces, of up to 7 points. Note compactness arises in this way,
and also contractible, as we saw above.

\begin{lemm} In the category of (all) topological spaces, \begin{itemize}

\item[r:] $(\emptyset\longrightarrow \{o\})^{r}$ is the class of surjections

\item[l:] $(\emptyset\longrightarrow \{o\})^{l}$ is the class of maps $A\longrightarrow B$ where $A\neq \emptyset$ or $A=B$

\item[rr:] $(\emptyset\longrightarrow \{o\})^{rr}=\{\{x\leftrightarrow y\rightarrow c\}\longrightarrow\{x=y=c\}\}^{l}=\{\{x\leftrightarrow y\leftarrow c\}\longrightarrow\{x=y=c\}\}^{l}$ is the class of subsets, i.e. injective maps $A\hookrightarrow B$ where the topology on $A$ is induced from $B$

\item[ lr:] $(\emptyset\longrightarrow \{o\})^{lr}$ is the class of maps $\emptyset\longrightarrow B$, $B$ arbitrary, and $A=B$

\item[ lrr:] $(\emptyset\longrightarrow \{o\})^{lrr}$ is the class of maps $A\longrightarrow B$ which admit a section

\item[ l:] $(\emptyset\longrightarrow \{o\})^{l}$ consists of maps $f:A\longrightarrow B$ such that either $A\neq \emptyset$ or $A=B=\emptyset$

\item[ rl:] $(\emptyset\longrightarrow \{o\})^{rl}$ is the class of maps of form $A\longrightarrow A\sqcup D$ where $D$ is discrete

\item[ rll:] $(\emptyset\longrightarrow \{o\})^{rll}$ is the class of maps $A\to B$ such that each connected subset of $B$ intersects the image of $A$; 
for “nice” spaces it means that the map $\pi_0(A)\to \pi_0(B)$ is surjective, 
where “nice” means that connected components are both open and closed.

\item[ rllr:] $(\emptyset\longrightarrow \{o\})^{rllr}$ is the class of maps of form $A\to A\sqcup B$ where $A\sqcup B$ 
denotes the disconnected union of $A$ and $B$.

\item[lrrr:] $\{\emptyset\longrightarrow\{o\}\}^{lrrr}$ is the class of injective maps, i.e. such that $f(x)\neq f(y)$ whenever $x\neq y$

\item[lrrrr:] $\{\emptyset\longrightarrow\{o\}\}^{lrrrr}$ is the class of maps such that an arbitrary (not necessarily continuous) section is necessarily continuous

\item[lrrrl:] $\{\emptyset\longrightarrow\{o\}\}^{lrrrl}$ is the class of {\em quotients}, i.e.~the maps $f:A\to B$ such that 
a subset $U\subset B$ is open in $B$ iff its preimage $f^{-1}(U)\subset A$ is open in $A$. 
\end{itemize}

\end{lemm}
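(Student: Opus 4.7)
The plan is to prove each item in the lemma by computing the Quillen orthogonal classes one letter at a time, starting from the base cases and building outward: each class $\{\emptyset\to\{o\}\}^{s}$ is determined by $\{\emptyset\to\{o\}\}^{s'}$ (where $s'$ truncates the last letter of $s$) via a single round of negation, so the structure is a tree of increasingly refined classes that can be filled in inductively.

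For the base cases, the argument is direct from the definition. The right orthogonal $(\emptyset\to\{o\})^{r}$ is the class of surjections, since a lifting problem $\emptyset\to X$ over $\{o\}\to Y$ amounts to finding a preimage of the chosen point. For $(\emptyset\to\{o\})^{l}$, the condition on $f:A\to B$ is that whenever a commutative square with $A\to\emptyset$ on top exists, a lift $B\to\emptyset$ exists; but such a square exists only when $A=\emptyset$, in which case the lift forces $B=\emptyset$, giving the condition ``$A\neq\emptyset$ or $A=B=\emptyset$''. For the second-level classes ($rr$, $lr$, $rl$) I would use small test surjections: for $rr$, the surjection from $X$ with the pullback topology along $f$ onto $f(X)$ with induced topology forces both injectivity and induced topology; for $rl$, the surjection $A\sqcup|B\setminus f(A)|\to B$ (with the second factor carrying discrete topology) forces the decomposition $B=A\sqcup D$ with $D$ discrete.

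For the deeper iterations I would apply the inductive step, using finite test morphisms of $2$ or $3$ points at each stage. The class $rll$ (surjectivity on $\pi_{0}$ for ``nice'' spaces) is detected by probing with inclusions of disconnected unions of singletons: the lift exists iff every component of the target meets the image, which rests on the assumption that components are clopen so that $\pi_{0}$ behaves well. The class $lrr$ follows by dualizing the characterisation of $l$, yielding maps that admit a set-theoretic, then continuous, section. The injective-maps characterisation $lrrr$ and the automatic-continuity-of-sections characterisation $lrrrr$ set up the final step: a continuous surjection is a quotient exactly when every set-theoretic section along which continuity is tested is automatically continuous, and this is precisely the content of the additional $l$-negation giving $lrrrl$.

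The main obstacle is bookkeeping rather than any single hard step: at each stage one must construct a test morphism that both detects membership in the intended class and rules out spurious maps, then check that the inductive hypothesis propagates cleanly. The most delicate computations are $rll$, where the ``niceness'' hypothesis on components being clopen is needed to make the $\pi_{0}$-interpretation coincide with the orthogonal, and $lrrrl$, where the identification with quotient maps rests on choosing a Sierpinski-type probe space whose lifts account exactly for the condition ``$f^{-1}(U)$ open implies $U$ open.'' The remaining entries of the list then follow as straightforward consequences.
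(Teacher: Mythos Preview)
Your proposal is correct and in fact supplies considerably more detail than the paper does: the paper's own proof of this lemma is the single sentence ``Each is an easy exercise in diagram chasing and point set topology.'' Your inductive scheme---compute each orthogonal one letter at a time, using small finite test maps at every stage---is exactly the intended approach, and the specific probes you name (the tautological surjection onto the image with induced topology for \texttt{rr}, the map $A\sqcup|B\setminus f(A)|\to B$ for \texttt{rl}, a Sierpinski-type probe for \texttt{lrrrl}) are the natural ones.

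Two small points of imprecision worth tightening. For \texttt{rll} you speak of ``probing with inclusions of disconnected unions of singletons,'' but \texttt{rll} is a \emph{left} orthogonal: the test maps sit on the right and are the maps $A\to A\sqcup D$ with $D$ discrete from \texttt{rl}, so the argument should read ``a map $f:X\to Y$ lifts against every $A\hookrightarrow A\sqcup D$ iff no connected piece of $Y$ lands entirely in the discrete cofactor,'' which unwinds to your $\pi_0$-surjectivity statement under the clopen-components hypothesis. For \texttt{lrrr} the phrase ``dualizing the characterisation of $l$'' is too loose; the clean route is to use \texttt{lrr} = maps admitting a section, and then observe that $f$ left-lifts against every split epi iff $f$ is injective (the forward direction uses the split epi $A\sqcup A\to A$ with the two obvious sections; the converse builds the lift by choosing the section's image on $f(A)$ and extending).
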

\begin{proof} Each is an easy exercise in diagram chasing and point set topology.
\end{proof} 

In \verb|lrrrl|, we apply Quillen negation 5 times and get a meaningful notion. 
Can it be more than 5 ? I.e.~can we apply Quillen negation $>5$ times to something simple or natural,
and still get a meaningful and/or well-known notion ? 

\subsection{Acknowledgements.} Tyrone (Cutler?) at \href{https://mathoverflow.net/questions/409266/are-trivial-fibrations-of-finite-cw-complexes-soft-for-normal-maps}{mathoverflow.net} brought Michael selection theory to our attention. 
We thank Martin Bays, Sergei Ivanov, Vladimir Sosnilo, participants of the A.Smirnov seminar,
and Nicolas Cianci, for helpful discussions.
Readability of \cite{2,3} is due to Urs Schreiber. 
We thank Alexandroff St.Petersburg topology seminar for the invitation. 
It is wishful to think that our expression $(\{ \{o\}\lra \{o\rightarrow c\}\}^{r}_{<5})^{lr}$ for compactness
would have not have fit the strict finitist criteria of Nikolai Schanin, teacher of Grigori Mints, and not 
have been considered inherently vague.

\def\FF{{\mathcal F}}
\begin{figure}
\small
\begin{center}

$ (a)\ \xymatrix{ K \ar[r]|{\text{id}} \ar@{->}[d] & K \ar[d] \\ { K\cup_{\FF}\{\text{``x''}\} } \ar[r] \ar@{-->}[ur]|{{\text{``x''}\mapsto x}}& \{\bullet\} }$
$\ \ \ \ (b)\ \xymatrix{ K \ar[r] \ar@{->}[d] & K \ar[d] \\ { K\cup_{\FF}\{\text{``x''}\} } \ar[r] \ar@{-->}[ur]  & \{\bullet\} }$
$\ \ \ \  (c)\ \xymatrix{ A \ar[r] \ar@{->}[d] & A \ar[d] \\ { A\cup_{\FF}\{\text{``x''}\} } \ar[r] \ar@{-->}[ur]  & \{\bullet\} }$

$ (d)\ \xymatrix{ X \ar[r]|{\text{id}} \ar@{->}[d] & X \ar[d]|{f} \\ { X\cup_{\mathfrak U}\{\text{``x''}\} } \ar[r] \ar@{-->}[ur]  & Y  }$
$\ \ \ \ \ (e)\ \xymatrix{ A \ar[r] \ar@{->}[d] & X \ar[d]|{g} \\ { A\cup_{\mathfrak U}\{\text{``x''}\} } \ar[r] \ar@{-->}[ur]  & Y  }$

$ (f)\ \xymatrix{ A \ar[r]|f \ar@{->}[d] & Y \ar[d]|{g} \\ { X } \ar[r] \ar@{-->}[ur]  & \{\bullet\}  }$
$\ \ \ \ \ (g)\ \xymatrix{ A \ar[r]|f \ar@{->}[d] & Y  \ar@{->}[r] & { \{\boldsymbol{B_1}\leftarrow O\rightarrow \boldsymbol{B_2}\} }  \\ { X }  \ar@{-->}[urr]  & { } & { } }$
$\ \ \ \  (h)\ \xymatrix{ A \ar[r] \ar@{->}[d] & { \{\boldsymbol{B_1}\leftarrow O\rightarrow \boldsymbol{B_2}\} } \ar[d] \\ { X } \ar[r] \ar@{-->}[ur]  & {\{\bullet\} }  }$

$ (i)\ \ \xymatrix{ A \ar[r] \ar@{->}[d] & { \{U\} } \ar[d] \\ { X } \ar[r] \ar@{-->}[ur]  & { \{ U \ra U' \} }  }$
$\ \ \ \ \ \ (j)\ \ \xymatrix{ A \ar[r] \ar@{->}[d] & { \{x\llrra y\} } \ar[d] \\ { X } \ar[r] \ar@{-->}[ur]  & \{x=y\}  }$
$\ \ \ \ \ \ (k)\ \ \xymatrix{ A \ar[r] \ar@{->}[d] & { \{o \ra c \} } \ar[d] \\ { X } \ar[r] \ar@{-->}[ur]  & \{o=c\}  }$

$ (l)\ \xymatrix{ {\{o\} } \ar[r] \ar@{->}[d] & X \ar[d] \\ { \{o \ra c \} } \ar[r] \ar@{-->}[ur]  & { Y }  }$

\end{center}
\caption{\label{fig1}\small
These are equivalent reformulations of quasi-compactness of spaces and its generalisation to maps, that of properness of maps. 
 (a) the identity map $K \xra {\id} K$
factors as $K\lra K\cup_{\FF} \{\oo\} \lra K$ 
 (b) this is also equivalent to $K$ being quasi-compact (we no longer require the arrow $K\lra K$ to be identity)
 (c) and in fact quasi-compact spaces are orthogonal to maps associated with ultrafilters 
 (d) $X\xra f  Y$ is proper, i.e. {\sf d) If $\mathfrak U$ is an ultrafilter on $X$ and if $y \in Y$ is a limit point of the ultrafilter
base $f (U)$, then there is a limit point $x$ of $\mathfrak U$ such that $f (x) = y$.}  \href{http://mishap.sdf.org/mints-lifting-property-as-negation/tmp/Bourbaki_General_Topology.djvu}{[Bourbaki, General Topology, I\S10.2,Th.1(d)]}
 (e) this is also equivalent to $X\xra f  Y$ is proper, i.e. this holds for each ultrafilter $\mathfrak U$ on each space $A$ 
 (f)  The mapping \ensuremath{f} has a continuous extension over \ensuremath{X}
 (h)  for every pair $B_1,B_2$ of disjoint closed subsets of \ensuremath{Y} the inverse images $f^{-1}(B_1)$
and $f^{-1}(B_2)$ have disjoint closures in the space $X$
 (i) the image of $A$ is dense in $B$
 (j) the map $A\lra B$ is injective
 (k) the topology on $A$ is induced from $B$
 (l) for $X$ and $Y$ finite, this means that the map $X\lra Y$ is closed, or, equivalently, proper
} \end{figure}


\begin{thebibliography}{10}


\bibitem[1]{1}
\newblock Wikipedia.
\newblock Lifting property.
\newblock \url{https://en.wikipedia.org/wiki/Lifting_property}


\bibitem[2]{2}
\newblock Ncatlab.
\newblock Lifting property.
\newblock \url{https://ncatlab.org/nlab/show/lift}

\bibitem[3]{3}
\newblock Ncatlab.
\newblock Separation axioms in terms of lifting properties.
\newblock \url{https://ncatlab.org/nlab/show/separation+axioms+in+terms+of+lifting+properties}

\bibitem[4]{4}
\newblock Alexandre Grothendieck.
\newblock Esquisse D’un programme. 
\newblock \url{https://webusers.imj-prg.fr/~leila.schneps/grothendieckcircle/EsquisseEng.pdf}, 
\url{https://webusers.imj-prg.fr/~leila.schneps/grothendieckcircle/EsquisseFr.pdf}


\bibitem[5]{5}
\newblock Ernst Michael.
\newblock  Selected Selection Theorems. 
\newblock The American Mathematical Monthly,  (1956). 63(4), 233. doi:10.2307/2310346 
\newblock \url{https://doi.org/10.2307/2310346}

\bibitem[6]{6}
\newblock Ernst Michael.
\newblock Continuous Selections II. 
\newblock  The Annals of Mathematics, 64(3), 562. doi:10.2307/1969603 
\newblock \url{https://doi.org/10.2307/1969603}

\bibitem[7]{7}
\newblock Ernst Michael.
\newblock Continuous Selections I. 
\newblock The Annals of Mathematics, 63(2), 361. doi:10.2307/1969615 
\newblock \url{https://doi.org/10.2307/1969615} 

\bibitem[8]{8}
\newblock Ernst Michael.
\newblock Another note on paracompact spaces. 
\newblock Proceedings of the American Mathematical Society. 8(4). 1957.
\newblock \url{https://www.ams.org/journals/proc/1957-008-04/S0002-9939-1957-0087079-9/S0002-9939-1957-0087079-9.pdf} 



\bibitem[9]{9}
\newblock Eric Wofsey.
\newblock On the algebraic topology of finite spaces.
\newblock 3/9/2008.
\newblock \url{https://www.docdroid.net/zJI3nw2/finite-spaces-wofsey-pdf}



\bibitem[10]{10}
\newblock Takashi Nishimura, 
Goo Ishikawa. 
\newblock Smooth retracts of Euclidean space.
\newblock  Kodai Math. J. 18(2): 260-265 (1995). 
\newblock DOI: 10.2996/kmj/1138043422.




\bibitem[11]{11}
\newblock  Eric K. van Douwen, Roman Pol. 
\newblock  Countable spaces without extension properties. 
\newblock  Bull. Acad. Polon. Sci. Sér. Sci. Math. Astronom. Phys. 25 (1977), no. 10, 987--991.



\bibitem[12]{Bourbaki}
\newblock Nicolas Bourbaki.
\newblock General Topology.
\newblock I\S10.2, Thm.1(d), p.101 (p.106 of file)
\href{http://mishap.sdf.org/mints-lifting-property-as-negation/tmp/Bourbaki_General_Topology.djvu}{General Topology. I\S10.2, Thm.1(d), p.101 (p.106 of file)}

\bibitem[13]{Engelking}
\newblock Ryszard Engelking.
\newblock General Topology. Thm.3.2.1, p.136.


\bibitem[14]{Taimanov}
\newblock A.~D.~Taimanov. 
\newblock On extension of continuous mappings of topological spaces. 
\newblock Mat. Sb. (N.S.), 31(73):2 (1952), 459-463
\newblock \url{www.mathnet.ru/eng/sm5540} 


\bibitem[14]{mintsGE}
\newblock M.Gavrilovich, K.Pimenov.
\newblock A naive diagram-chasing approach to formalisation of tame topology.
\newblock 2018.
\newblock \url{http://mishap.sdf.org/mintsGE.pdf}


\bibitem[15]{DMG}
\newblock Point set topology as diagram chasing computations. Lifting properties as instances of  negation.
\newblock The De Morgan Gazette \ensuremath{5} no.~4 (2014), 23--32,  ISSN 2053-1451.
\newblock \url{http://mishap.sdf.org/mints/mints-lifting-property-as-negation-DMG_5_no_4_2014.pdf}



\end{thebibliography}
\end{document}